% !!!IMPORTANT NOTE: Please read carefully all information including those preceded by % sign
\documentclass{arx}
\usepackage{amsmath}
  \usepackage{paralist}
  \usepackage{graphics} %% add this and next lines if pictures should be in esp format
  \usepackage{epsfig} %For pictures: screened artwork should be set up with an 85 or 100 line screen
 \usepackage[colorlinks=true]{hyperref}
   % Warning: when you first run your tex file, some errors might occur, please just
   % press enter key to end the compilation process,  then it will be fine if you run your tex file again.
   % Note that it is highly recommended by AIMS to use this package.
\hypersetup{urlcolor=blue, citecolor=red}

  \textheight=8.2 true in
   \textwidth=5.0 true in
    \topmargin 30pt
     \setcounter{page}{1}

\newtheorem{theorem}{Theorem}[section]
\newtheorem{corollary}{Corollary}

\newtheorem{lemma}[theorem]{Lemma}
\newtheorem{proposition}{Proposition}

\theoremstyle{definition}

\newtheorem{remark}{Remark}

\newcommand{\R}{{\Bbb R}}

\newcommand{\Z}{{\Bbb Z}}

%% Place the running title of the paper with 40 letters or less in []
 %% and the full title of the paper in { }.
\title[Separation dichotomy and  wavefronts  ]
      {Separation dichotomy and  wavefronts  for \\ a nonlinear  convolution equation}

% Place all authors' names in [ ] shown as running head;
% No more than 40 letters. Leave { } empty
% Please use `and' to connect the last two names if applicable
\author[Carlos Gomez, Humberto Prado  and Sergei Trofimchuk]{}

% It is required to enter MSC and Keywords.
\subjclass{Primary: 34K12, 35K57; Secondary: 92D25.}
 \keywords{Convolution, monostable  equation,  asymmetric non-local response.}

% Email address of each of all authors is required.
% You may list email addresses of all other authors, separately.
 \email{cgomez@inst-mat.utalca.cl}
 \email{humberto.prado@usach.cl}
 \email{trofimch@inst-mat.utalca.cl}
 
% Put your short thanks below. For long thanks/acknowlegements,
%please go to the last acknowlegments section.
%\thanks{The first author is supported by NSF grant xx-xxxx}

\begin{document}
\maketitle

% Enter the first author's name and address:
\centerline{\scshape Carlos Gomez$^1$,  Humberto Prado$^2$ \and  Sergei Trofimchuk$^1$}
\medskip
{\footnotesize
% please put the address of the first author
 \centerline{ 1 Instituto de M\'atematica y F\'isica, Universidad de Talca}
   \centerline{Casilla 747, Talca, Chile}
} % Do not forget to end the {\footnotesize by the sign }

\centerline{\scshape }
\medskip
{\footnotesize
 % please put the address of the second  and third author
 \centerline{2 Departamento de Matem\'atica, Universidad de Santiago de Chile}
   \centerline{Casilla 307, Correo-2, Santiago-Chile  }
}

\bigskip

%The abstract of your paper
\begin{abstract}
\noindent   This paper is concerned with a scalar nonlinear convolution equation which appears naturally  in the theory of traveling waves for monostable evolution models.  First, we prove that  each bounded positive solution of the convolution equation should either be asymptotically separated from zero or it should converge 
(exponentially) to zero.  This dichotomy principle is then used to establish a general theorem guaranteeing the uniform persistence and existence  of  semi-wavefront solutions to the convolution equation. 
Finally, we apply our  abstract results to several well-studied classes of 
evolution equations with asymmetric non-local and non-monotone response.  
We show that, contrary to the symmetric case, 
 these equations  can possess at the same time the stationary, the expansion  
and  the extinction waves.  
\end{abstract}

\section{Introduction and main results} \label{intro}

In this paper, we continue to study  the nonlinear scalar convolution equation
\begin{equation}\label{17} \hspace{-7mm}
\phi(t) = \int_Xd\mu(\tau)\int_\R K(s,\tau)g(\phi(t-s),\tau) ds, \quad t \in \R,
\end{equation}
introduced in \cite{AGT}.
Here $(X,\mu)$ is a finite measure space, an appropriate kernel $K(s,\tau) \geq 0$  is
integrable on $\R\times X$ with $ \ \int_{\R}K(s,\tau)ds>0, \ \tau \in X,$ while measurable $g: \R_+ \times X \to \R_+,$ $g(0,\tau) \equiv 0, $ is continuous in $\phi$ for every fixed $\tau \in X$ and there exists $g'(0,\tau)>0$. Our   goal here   is to establish a satisfactory criterion for  the existence  of  semi-wavefronts (i.e. positive, bounded,  and vanishing  at either $+\infty$ or $-\infty$ solutions) to (\ref{17}). Then in Section \ref{apn} we will apply this criterion  to two non-local and asymmetric  monostable evolution equations.  In this way, we  develop  further some ideas from  \cite{TAT}.  It should be noted that equation (\ref{17}) is one of  valid general forms for the description of  traveling wave profiles.   Other similar yet non-equivalent  functional equations can be found in \cite{AR,dkMB,dk,KS,VL,WBR}. 

It was shown in \cite{AGT} that  the characteristic function
$$
\chi(z):= 1-\int_X\int_\R K(s,\tau)g'(0,\tau)d\mu(\tau) e^{-sz}ds.
$$
plays a key role in the investigation of equation (\ref{17}).  In particular, the following holds (see \cite[Theorem 2]{AGT}):
\begin{proposition} \label{gg} Assume $\chi(0)<0$.  Let $\phi:\R\rightarrow[0,+\infty)$ be a bounded solution  to  equation (\ref{17}). If $\phi(-\infty)=0$  and $\phi(t)\not\equiv 0, \ t \leq t'$ for each fixed $t'$, then $\chi(z)$  is well defined and has a zero on some non-degenerate interval $(0, \gamma]$.  
\end{proposition}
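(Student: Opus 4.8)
The plan is to translate the problem into a statement about the Laplace transform of $\phi$ near $-\infty$ and to read off the characteristic root as the abscissa of convergence. First I would rewrite (\ref{17}) in the convolution form $\phi(t)=\int_X d\mu(\tau)\int_\R K(t-u,\tau)g(\phi(u),\tau)\,du$ and set $k(s,\tau):=K(s,\tau)g'(0,\tau)$, so that $\int_X\int_\R k\,ds\,d\mu=1-\chi(0)>1$ and, formally, the symbol of the linear operator $\psi\mapsto k*\psi$ is $\widehat k(\lambda)=\int_X\int_\R k(s,\tau)e^{-\lambda s}\,ds\,d\mu=1-\chi(\lambda)$. Since $g$ is differentiable at $0$ with $g(0,\tau)=0$, near $-\infty$ one has $g(\phi(u),\tau)=g'(0,\tau)\phi(u)+h(u,\tau)$ with $h=o(\phi)$, while on $[0,\infty)$ the bounded values of $\phi$ are controlled by $g(\phi(u),\tau)\le\max_{[0,M]}g(\cdot,\tau)$. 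I then introduce $\mathcal L(\lambda):=\int_{-\infty}^0\phi(t)e^{-\lambda t}\,dt$ and its abscissa of convergence $\gamma:=\sup\{\lambda\ge0:\mathcal L(\lambda)<\infty\}\in[0,\infty]$, which is positive and nondecreasing in $\lambda$ because $\phi\ge0$ and $\phi\not\equiv0$ near $-\infty$.

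The first substantial step is to show $0<\gamma<\infty$, i.e. that $\phi$ decays at a finite, strictly positive exponential rate; this is exactly where the supercriticality $\chi(0)<0$ enters. The mass $\int\int k>1$ prevents $\phi$ from decaying faster than every exponential (super-exponential decay is incompatible with an amplifying renewal kernel), giving $\gamma<\infty$, and a comparison of $\phi$ with the linear majorant $k*\phi$ on half-lines $(-\infty,T]$, together with $\phi(-\infty)=0$, forces a genuine exponential rate $\gamma>0$ rather than a merely sub-exponential one. I expect this dichotomy — super-exponential versus sub-exponential decay, both to be excluded — to be the main obstacle, since the two-sided convolution couples the small far-left values of $\phi$ to its order-one values on $[0,\infty)$ and the estimate must be uniform in $\tau$.

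Granting $0<\gamma<\infty$, well-definedness of $\chi$ on $(0,\gamma]$ comes for free: the lower bound $\phi(t)\ge c\int_{-\infty}^{t}K(v,\tau_0)\,dv$, obtained by restricting the convolution to a region where $\phi$ is bounded below, combined with $\phi(t)\le C e^{\gamma t}$, shows that the left cumulative tail of $K$ is $O(e^{\gamma t})$, whence $\int_{-\infty}^0 K(v,\tau)e^{-\lambda v}\,dv<\infty$ for $\lambda\in(0,\gamma]$ after an integration by parts. Thus $\widehat k$, and with it $\chi$, is finite and analytic on the non-degenerate interval $(0,\gamma]$; I denote by $z_+$ the right abscissa of convergence of $\widehat k$, so that $z_+\ge\gamma$.

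Finally I would locate the zero by a singularity argument. Applying the two-sided transform to the convolution identity and using Fubini yields, on $(0,\gamma)$, a relation of the schematic form $\chi(\lambda)\,\mathcal L(\lambda)=R(\lambda)$, where $R$ collects the transform of the higher-order remainder $h$ (whose abscissa is strictly larger than $\gamma$ because $h=o(\phi)$) and the contribution of the values of $\phi$ on $[0,\infty)$ convolved against the now exponentially small left tail of $K$ (which converges up to $z_+$). Since $\phi|_{(-\infty,0]}\ge0$, Pringsheim's theorem makes $\lambda=\gamma$ a singular point of $\mathcal L$, with $\mathcal L(\lambda)\to+\infty$ as $\lambda\uparrow\gamma$, whereas $R$ remains finite there; the identity then forces $\chi(\gamma)=0$, producing the desired zero in $(0,\gamma]$. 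The delicate point in this last step, and the place where the hypotheses on the kernel are really used, is to guarantee that $R$ is regular precisely at $\gamma$ (equivalently $\gamma<z_+$), so that the only mechanism for the singularity of $\mathcal L$ is the vanishing of $\chi$; the monostable structure, which makes the remainder contribute with a definite sign, is what rules out the borderline case $\gamma=z_+$.
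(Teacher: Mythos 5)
First, a point of reference: the paper does not prove Proposition~\ref{gg} at all --- it is imported verbatim from \cite[Theorem 2]{AGT} (the ``Diekmann--Kaper theory revisited'' paper). So there is no in-text proof to match your argument against; what you have written is a sketch of exactly the classical Diekmann--Kaper route (exponential decay at $-\infty$, bilateral Laplace transform, Pringsheim singularity at the abscissa of convergence), which is indeed the framework of \cite{AGT}. Judged on its own merits, however, the sketch has genuine gaps, not just omitted routine work.

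The most serious one is the final step. You claim that the remainder transform $R$ is regular at $\gamma$ ``because $h=o(\phi)$.'' This is false: $h=o(\phi)$ does not improve the abscissa of convergence. If $\phi(t)\sim e^{\gamma t}$ and $h(t)\sim e^{\gamma t}/|t|$ as $t\to-\infty$, then $h=o(\phi)$ yet $\int_{-\infty}^{-1}h(t)e^{-\gamma t}\,dt$ diverges, so $R$ blows up at $\gamma$ together with $\mathcal L$ and the identity $\chi(\lambda)\mathcal L(\lambda)=R(\lambda)$ forces nothing. The mere existence of $g'(0,\tau)$ gives only $o(\phi)$; to make $R$ regular past $\gamma$ one needs either a H\"older-type condition $g(u,\tau)=g'(0,\tau)u+O(u^{1+\alpha})$ or a Tauberian (Ikehara-type) substitute, and your appeal to ``the monostable structure'' and ``definite sign'' does not supply either. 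Second, the bound $\phi(t)\ge c\int_{-\infty}^{t}K(v,\tau_0)\,dv$, which you use to prove that $\chi$ is even \emph{defined} on $(0,\gamma]$, presupposes that $\phi$ is bounded below by a positive constant on a right half-line; no such persistence is available at this stage (it is a \emph{consequence} of Proposition~\ref{gg} via Theorem~\ref{thm:per} and Corollary~\ref{pu}, so using it here is circular), and there is no distinguished atom $\tau_0$ in the hypotheses of the proposition. Third, the two quantitative pillars $0<\gamma<\infty$ --- which require first showing $\phi\in L_1$ near $-\infty$ and then running a renewal-type iteration on $\Phi(t)=\int_{-\infty}^{t}\phi$ in both directions --- are announced as ``the main obstacle'' but not carried out; the phrase ``super-exponential decay is incompatible with an amplifying renewal kernel'' is the statement to be proved, not a proof. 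The overall architecture is right, but as it stands the argument does not close.
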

And as we will prove below under  the additional mild conditions 
\begin{description}
\item[{\rm \bf(C)}]  For each $\delta >0$ there is  a measurable  $C_\delta(\tau)\geq 0$ such that 
$$g(u,\tau)\leq C_\delta(\tau)u,  \ u\in[0,\delta], \quad   \int_X C_\delta(\tau)d\mu(\tau)\int_\R K(s,\tau)ds<+\infty; $$ 
\item[{\rm \bf(P)}] Bounded solution $\phi(t)\geq 0$  of (\ref{17})  vanishes  at some point only if   $\phi(t) \equiv 0$, 
\end{description}
the conclusion of Proposition  \ref{gg} remains true even if we replace assumption 
$\phi(-\infty)=0$ by a weaker $\liminf_{t\to -\infty}\phi(t)=0$.  Moreover,  in Theorem \ref{thm:per} below  we  prove the equivalence of these two  properties for solutions of equation (\ref{17}).  In view of Theorem 1 and Lemma 3 from \cite{AGT},  this result has the following nice consequence:  under a few natural restrictions on $K, g$,  each bounded positive solution $\phi$ with $\liminf_{t\to -\infty}\phi(t)=0$ converges  exponentially  to zero at $-\infty$. 

Note that assumption {\rm \bf(P)}  can be easily checked due to 
\begin{lemma}\label{lem:nucleo}
Assume that there are  $\widetilde{X}\subset X$, $\mu(\widetilde{X})>0,$ and a measurable  $A: \widetilde{X} \to (0, +\infty)$  such that  $\tau\in\widetilde{X}$ implies   (i) $g(u,\tau)=0$ if and only if $u=0$; (ii) $K(s,\tau)>0$ for all $s\in(-A(\tau),A(\tau))=: I_\tau$. Then $\phi(0)=0$ implies $\phi(t)\equiv 0$. 
\end{lemma}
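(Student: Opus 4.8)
The plan is to show that the zero set $Z:=\{t\in\R:\phi(t)=0\}$ is nonempty, closed and open, and then to invoke the connectedness of $\R$. Nonemptiness is immediate from the hypothesis $\phi(0)=0$.

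First I would record that every bounded solution $\phi$ of (\ref{17}) is continuous. Writing $h_\tau(u):=g(\phi(u),\tau)$, the function $h_\tau$ is bounded and measurable (as $\phi$ is measurable and $g(\cdot,\tau)$ continuous), so for each fixed $\tau$ the inner integral $\int_\R K(s,\tau)h_\tau(t-s)\,ds$ is the convolution of an $L^1$ kernel with a bounded function and hence depends continuously on $t$. Condition \textbf{(C)}, with $\delta=\sup\phi$, supplies the $\tau$-integrable dominating function $2\delta\, C_\delta(\tau)\int_\R K(s,\tau)ds$ needed to pass the limit $t'\to t$ under $\int_X d\mu(\tau)$ via dominated convergence. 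Thus $\phi$, being equal to the right-hand side of (\ref{17}), is continuous, and consequently $Z$ is closed.

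The heart of the argument is the local propagation of zeros. Fix any $t_0\in Z$. Since the integrand in (\ref{17}) is nonnegative and $\phi(t_0)=0$, the inner integral $\int_\R K(s,\tau)g(\phi(t_0-s),\tau)\,ds$ must vanish for $\mu$-almost every $\tau$; because $\mu(\widetilde{X})>0$, we may select such a $\tau\in\widetilde{X}$. For this $\tau$ the nonnegative integrand $K(s,\tau)g(\phi(t_0-s),\tau)$ is zero for almost every $s$, and since $K(s,\tau)>0$ on $I_\tau$ by (ii), we obtain $g(\phi(t_0-s),\tau)=0$ for a.e.\ $s\in I_\tau$. By (i) this forces $\phi(t_0-s)=0$ for a.e.\ $s\in I_\tau$; as $I_\tau=(-A(\tau),A(\tau))$ is symmetric, $\phi$ vanishes almost everywhere on the open interval $(t_0-A(\tau),t_0+A(\tau))$. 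By the continuity established above, $\phi\equiv 0$ on this entire interval, which is an open neighborhood of $t_0$ contained in $Z$. Hence $Z$ is open.

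Being nonempty, open and closed in the connected space $\R$, the set $Z$ must equal $\R$, i.e.\ $\phi\equiv 0$. The only delicate points are the measure-theoretic bookkeeping—using Tonelli's theorem to pass from the vanishing of the double integral to the $\mu\times ds$-a.e.\ vanishing of the integrand, and ensuring that a genuine $\tau\in\widetilde{X}$ with $A(\tau)>0$ can be chosen at each $t_0$—together with the continuity verification that allows a.e.\ vanishing to be upgraded to pointwise vanishing on a full open interval.
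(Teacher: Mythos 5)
Your proof is correct and follows essentially the same route as the paper's: propagate the zero at $t_0$ to an open neighborhood via the positivity of $K(\cdot,\tau)$ on $I_\tau$ and hypothesis (i), then conclude by noting the zero set of the continuous $\phi$ is nonempty, open and closed. You are somewhat more careful than the paper about the almost-everywhere versus pointwise distinction and about justifying continuity via \textbf{(C)}, but the argument is the same.
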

\begin{proof}  Suppose that $0=\phi(0)=\int_X d\mu(\tau)\int_\R K(s,\tau)g(\phi(-s),\tau)ds$. Then we have $K(s,\tau)g(\phi(-s),\tau)=0$ almost everywhere on $\widetilde X \times \R$. Hence, for some $\tau_0\in\widetilde{X}$,  we obtain that $g(\phi(-s),\tau_0)=0$ for all  $s\in I_{\tau_0}$. Thus $\phi(-s)=0$, $s\in I_{\tau_0}$. Similarly, if $\phi(t_0)=0$ for some $t_0\in\R$, then $\phi(t)=0$ for all $t$ in an open neighborhood of $t_0$. In consequence, the set of zeros of continuous $\phi$ is open and closed, and we may conclude that $\phi\equiv 0$. 
\end{proof}
We are ready to state our first main result:  
\begin{theorem}\label{thm:per} Assume {\rm \bf(C)}, {\rm \bf(P)} with $\chi(0) <0$. Then the following dichotomy holds for each bounded solution $\phi(t)\geq 0$  of (\ref{17}):  either  $\liminf_{t\to +\infty}\phi(t)>0$ or $\phi(+\infty)=0$.  The similar alternative is also valid at $-\infty$.   
\end{theorem}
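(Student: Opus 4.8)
The plan is to analyse the solution through its translates and the $\omega$-limit set of its orbit, thereby turning the dichotomy into the statement that the zero solution is \emph{isolated} inside this limit set. First I would record two consequences of the hypotheses. By \textbf{(P)}, a solution that is not identically zero cannot vanish anywhere, so every nontrivial bounded $\phi\ge 0$ is in fact strictly positive on $\R$. Next, rewriting \eqref{17} as a convolution $\phi(t)=\int_X d\mu(\tau)\int_\R K(t-r,\tau)g(\phi(r),\tau)\,dr$ and using \textbf{(C)} to dominate $g(\phi(\cdot),\tau)$ by $C_\delta(\tau)\|\phi\|_\infty$ with $\delta=\|\phi\|_\infty$, the continuity of translation in $L^1$ in the $s$-variable of $K$ together with dominated convergence over $X$ shows that $\phi$ is uniformly continuous, with a modulus depending only on $\|\phi\|_\infty$, $K$ and $C_\delta$. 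Hence the family $\{\phi(\cdot+h):h\in\R\}$ is bounded and equicontinuous.

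I would then pass to limits of translates. Given $t_n\to+\infty$, Arzel\`a--Ascoli yields a subsequence along which $\phi(\cdot+t_n)$ converges locally uniformly to some $\psi$; dominated convergence (again via \textbf{(C)}) lets me pass to the limit inside \eqref{17}, so $\psi$ is itself a bounded solution with $0\le\psi\le\|\phi\|_\infty$. The resulting $\omega$-limit set $\Omega^+$ is therefore nonempty, compact, connected and invariant under the time-shift. Now the dichotomy at $+\infty$ reduces to a single claim. If $\liminf_{t\to+\infty}\phi(t)=0$, pick $t_n\to+\infty$ with $\phi(t_n)\to 0$; a subsequential limit $\psi_0\in\Omega^+$ satisfies $\psi_0(0)=0$, so $\psi_0\equiv 0$ by \textbf{(P)}, i.e. $0\in\Omega^+$. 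Conversely $0\in\Omega^+$ forces $\liminf_{+\infty}\phi=0$, and $\Omega^+=\{0\}$ is equivalent to $\phi(+\infty)=0$. Thus it suffices to prove: \emph{if $0\in\Omega^+$ then $\Omega^+=\{0\}$.}

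The heart of the matter is to see the zero solution as a \emph{repeller}, which is exactly where $\chi(0)<0$ enters. Since $\chi(0)<0$ means $\int_X\int_\R K(s,\tau)g'(0,\tau)\,d\mu\,ds>1$, I can choose a small $\delta_0>0$, a set $X_0\subset X$ of positive measure and a factor $1-\eta$ close to $1$ so that $g(u,\tau)\ge(1-\eta)g'(0,\tau)u$ for $u\in[0,\delta_0]$, $\tau\in X_0$, while the truncated mass $\rho:=(1-\eta)\int_{X_0}\int_\R K(s,\tau)g'(0,\tau)\,d\mu\,ds$ still exceeds $1$. On any region where $\phi\le\delta_0$ this yields a \emph{supercritical} linear lower bound $\phi(t)\ge\int_{X_0}d\mu(\tau)\int_\R(1-\eta)K(s,\tau)g'(0,\tau)\phi(t-s)\,ds$. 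Suppose $0\in\Omega^+$ but $\Omega^+\neq\{0\}$. Using the invariance, connectedness and internal chain transitivity of $\Omega^+$, one extracts a nontrivial entire solution $\eta\in\Omega^+$ with $\alpha$-limit $\{0\}$, that is $\eta(-\infty)=0$ and $\eta>0$; Proposition \ref{gg} then applies to $\eta$ and produces a positive root $\gamma$ of $\chi$, and by Theorem 1 and Lemma 3 of \cite{AGT} one obtains the precise exponential asymptotics $\eta(t)\asymp e^{\gamma t}$ as $t\to-\infty$. Feeding these asymptotics into the supercritical lower bound shows that small values of $\phi$ are amplified across long windows at a definite geometric rate; since $0\in\Omega^+$ produces arbitrarily long windows on which $\phi$ is arbitrarily small (integrability of $K$ renders the omitted tail mass negligible), this amplification is incompatible with $\phi$ also returning to values near $\sup\Omega^+>0$. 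This contradiction forces $\Omega^+=\{0\}$, hence $\phi(+\infty)=0$.

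The argument at $-\infty$ is entirely parallel, working with the $\alpha$-limit set $\Omega^-$; because the windows are two-sided, the asymmetry of $K$ plays no role and only its integrability is used, so one likewise obtains the equivalence $\liminf_{t\to-\infty}\phi=0\iff\phi(-\infty)=0$. I expect the genuine obstacle to be the repulsion step: since $g$ is merely continuous and need not be monotone, no comparison or maximum principle is available globally, so the repulsion must be localised to the region $\{u\le\delta_0\}$ where the linearisation dominates, and the supercritical renewal inequality must be controlled quantitatively — tracking the kernel tails through the integrability in \textbf{(C)} — in order to convert weak smallness of $\phi$ into the required growth. Establishing this estimate rigorously, rather than the soft compactness of the first two steps, is the crux of the proof.
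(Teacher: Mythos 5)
Your first two steps (uniform continuity of $\phi$ via {\rm \bf(C)} and the $L^1$-continuity of translation, and the reduction of the dichotomy, by compactness of translates, to the claim that $0$ is isolated in the limit set) match the paper's strategy. The genuine gap is the repulsion step, which you yourself identify as the crux but do not carry out, and the route you sketch for it has concrete defects. The extraction of a nontrivial entire solution $\eta\in\Omega^+$ with $\eta(-\infty)=0$ from ``$0\in\Omega^+$, $\Omega^+\neq\{0\}$, chain transitivity'' is a Butler--McGehee-type statement that presupposes $\{0\}$ is an \emph{isolated} invariant set --- which is essentially the repeller property you are trying to establish, so as stated the argument is circular. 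The exponential asymptotics $\eta(t)\asymp e^{\gamma t}$ from Theorem 1 and Lemma 3 of \cite{AGT} are available only under additional restrictions on $K,g$ that Theorem \ref{thm:per} does not assume (the introduction is explicit about this), so they cannot be invoked here. And the case analysis on the sign of the real zeros of $\chi$ (which the paper needs, via Proposition \ref{gg} applied at the appropriate end, to guarantee that the small limit solutions do not themselves vanish at infinity) is absent.

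Most importantly, the final ``amplification'' contradiction does not follow from the estimate you propose. A supercritical truncated renewal inequality $\phi(t)\ge(1-\eta)\int_{X_0}d\mu(\tau)\int_{|s|\le N}K(s,\tau)g'(0,\tau)\phi(t-s)\,ds$, valid on a long window $[p,q]$ where $\phi\le\delta_0$, yields upon integration over the window only a bound of the form $\int_p^q\phi\le C(N,\delta_0)$ \emph{independent of the window length}; this is perfectly compatible with $\phi$ decaying to $0$ across the window, so no contradiction results. What makes this step work in the paper is the normalization: one extracts solutions $\zeta_j$ of (\ref{17}) with $0<\max_{t\in\R}\zeta_j(t)=\zeta_j(0)\le 1/j$, sets $y_j=\zeta_j/\zeta_j(0)$, and passes (by dominated convergence, with the domination supplied by {\rm \bf(C)}) to a solution $y_*\colon\R\to[0,1]$, $y_*(0)=1$, of the \emph{exact} linearized equation on all of $\R$; only then does the double integration argument (first over $[t',t]$ to conclude $y_*\in L_1(\R)$, then over $\R$ to get $\int_\R y_*=\rho\int_\R y_*$ with $\rho>1$) produce the contradiction. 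Without the normalization and the exact limit equation, the repulsion step remains unproved.
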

An easy  combination of results from Proposition \ref{gg} and Theorem \ref{thm:per} leads to 
\begin{corollary}\label{pu}
If  $\chi(z)$ does not have any positive [negative] zero and $\phi$ is a positive bounded solution of  (\ref{17}), then $\liminf\limits_{t\to -\infty}\phi(t)>0$ [respectively, $\liminf\limits_{t\to +\infty}\phi(t)>0$].  As a consequence,  equation  (\ref{17}) can not have positive pulse solutions (i.e. solutions satisfying $\phi(-\infty) = \phi(+\infty)=0$). 
\end{corollary}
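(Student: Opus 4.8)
The plan is to derive everything from the dichotomy in Theorem~\ref{thm:per} together with Proposition~\ref{gg}, arguing by contradiction. For the first assertion, suppose that $\chi$ has no positive zero yet $\liminf_{t\to-\infty}\phi(t)=0$ for some positive bounded solution $\phi$. Since $\phi$ is nontrivial, property \textbf{(P)} forces $\phi(t)>0$ for every $t$; in particular $\phi\not\equiv0$ on each half-line $(-\infty,t']$. The alternative at $-\infty$ provided by Theorem~\ref{thm:per} then upgrades the hypothesis $\liminf_{t\to-\infty}\phi(t)=0$ to the stronger $\phi(-\infty)=0$. At this point all hypotheses of Proposition~\ref{gg} hold (recall $\chi(0)<0$), so $\chi$ must possess a zero in some interval $(0,\gamma]$, contradicting the assumed absence of positive zeros. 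Hence $\liminf_{t\to-\infty}\phi(t)>0$.

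The bracketed statement at $+\infty$ follows by the reflection $t\mapsto-t$. Setting $\psi(t)=\phi(-t)$ and substituting $s\mapsto-s$, one checks that equation~(\ref{17}) keeps its form with $K(s,\tau)$ replaced by $K(-s,\tau)$, that conditions \textbf{(C)} and \textbf{(P)} and the sign of $\chi(0)$ are preserved, and that the associated characteristic function becomes $\chi(-z)$. Thus positive zeros are exchanged with negative ones while $\phi(+\infty)=0$ becomes $\psi(-\infty)=0$, and the previous paragraph applies verbatim to $\psi$.

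For the nonexistence of positive pulses I would combine the two one-sided results with the convexity structure of $\chi$. Observe that $\chi(z)=1-\int_X\int_\R K(s,\tau)g'(0,\tau)\,d\mu(\tau)\,e^{-sz}\,ds$ is concave on the interval where it is finite, being $1$ minus an integral of the convex functions $z\mapsto e^{-sz}$. Consequently a concave $\chi$ with $\chi(0)<0$ cannot vanish simultaneously at a point $\gamma_-<0$ and a point $\gamma_+>0$: writing $0$ as a convex combination of $\gamma_-$ and $\gamma_+$ and invoking concavity would yield $\chi(0)\ge0$. Therefore $\chi$ carries zeros on at most one side of the origin.

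Now suppose, for contradiction, that $\phi$ is a positive pulse, i.e.\ $\phi(-\infty)=\phi(+\infty)=0$. If $\chi$ has no positive zero, the first assertion gives $\liminf_{t\to-\infty}\phi(t)>0$, contradicting $\phi(-\infty)=0$. Otherwise $\chi$ has a positive zero, whence by the concavity remark it has no negative zero, and the bracketed assertion gives $\liminf_{t\to+\infty}\phi(t)>0$, contradicting $\phi(+\infty)=0$. Either way a contradiction arises, so no positive pulse can exist. The only genuinely nonroutine ingredient is this concavity observation, which is precisely what eliminates the otherwise problematic case in which $\chi$ possesses zeros of both signs; everything else is a direct bookkeeping of the dichotomy and of Proposition~\ref{gg}.
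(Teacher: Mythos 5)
Your argument is correct and follows essentially the same route as the paper: Theorem~\ref{thm:per} upgrades $\liminf_{t\to\mp\infty}\phi(t)=0$ to $\phi(\mp\infty)=0$, Proposition~\ref{gg} (applied to the reflected equation for the $+\infty$ case, where the characteristic function becomes $\chi(-z)$) then produces a zero of $\chi$ of the forbidden sign, and the concavity of $\chi$ together with $\chi(0)<0$ shows all real zeros share one sign, which rules out pulses. The concavity observation you single out is precisely the one-line proof the paper gives, so there is nothing to add.
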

\begin{proof} 
Since $\chi(0) <0$ and
$\chi$ is concave on its maximal domain of  definition,  all real zeros of $\chi $ should be of the same sign (if they exist). \end{proof}
Let $\omega$ denote either $+\infty$ or $-\infty$. By Corollary \ref{pu}, we have the following point-wise persistence property:  for each  bounded positive solution $\phi(t)$ of Eq. (\ref{17}) satisfying $\phi(-\omega)=0$  there is  some $\delta(\phi) >0$ such that $\liminf_{t\to \omega}\phi(t)\geq \delta(\phi)$.   This fact allowed us  to exclude the latter inequality from the definition of semi-wavefronts (cf. with boundary conditions (1.6) in \cite{BNPR}).  Now,  in order to prove the uniform persistence (this means that the above mentioned $\delta(\phi)$ can be chosen independent of $\phi$) as well as the existence of solutions to equation (\ref{17}), we will impose additional conditions  on its nonlinearity: 
\begin{description}
\item[{\rm \bf(N)}]  N1. There exists $\tau_0\in X, \mu(\tau_0)=1,$ such that 
$g(v,\tau)$ increases in $v$ for 

each  fixed $\tau\not=\tau_0$ and $g(v,\tau_0) >0, \ v >0$. 
Consider the monotone function $$\tilde{g}(v):=\int_{X\setminus\{\tau_0\}}g(v,\tau)d\mu(\tau)\int_{\R}K(s,\tau)ds.$$ 

N2.   There exists  $\zeta_2>0$ such that  $\Theta(v):=v-\tilde{g}(v)$ is strictly  increasing on $[0,\zeta_2]$, and  
$\Theta(\zeta_2)>C\max_{v\geq 0}g(v,\tau_0)$  
where $C:= \int_{\R}K(s,\tau_0)ds$.
\end{description}
Set $G(v): = \Theta^{-1}(Cg(v,\tau_0)).$  It is clear that $G(0)=0, \ 0 < G(v) < \zeta_2, \ v >0,$ and that the graphs of $G(v)$ and $g(v,\tau_0)$ have similar geometrical shapes. In particular, they share the same critical points. 

\begin{figure}[h]\label{F1}
\begin{center}
\includegraphics[scale=0.7]{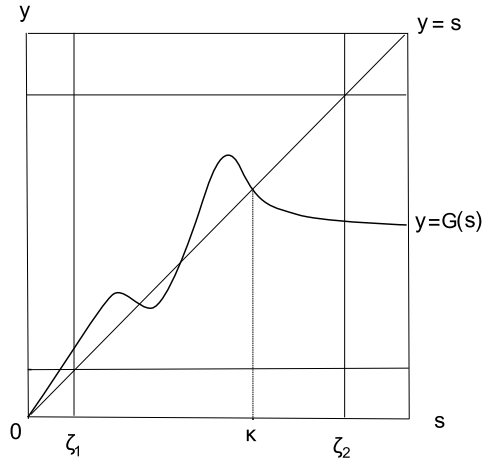}
\end{center}
\caption{Nonlinearity $G$ under hypotheses {\rm \bf(N)} and $\chi(0)<0$.}
\end{figure}

If $\varphi(t)=c$ is a constant solution of (\ref{17}), then $c={G}(c)$ because of the relation 
\begin{equation*}
c=\tilde{g}(c)+g(c,\tau_0)\int_\R K(s,\tau_0)ds=\tilde{g}(c)+C g(c,\tau_0) = c- \Theta(c)+C g(c,\tau_0). 
\end{equation*}

Several  additional important properties of $G$ are listed below: 
\begin{lemma} \label{ran} Let $\chi(0)< 0$ and  {\rm \bf (C), (N)} hold. Then, for some $\zeta_1 \in (0, \zeta_2)$,  \\
1. $G \in C(\R_+,\R_+)$ is positive for $s
> 0$ and there exists $G'(0+)>1$;\\
2. $G([\zeta_1,\zeta_2])\subseteq [\zeta_1,\zeta_2]$ and
$G(\R_+)\subseteq [0,\zeta_2]$;\\
3. $\min_{s \in [\zeta_1,\zeta_2]}G(s) = G(\zeta_1)$ while $G(s) > s$ for $s \in (0, \zeta_1]$. 
\end{lemma}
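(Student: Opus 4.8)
The plan is to base the whole argument on the defining identity $\Theta(G(v))=Cg(v,\tau_0)$, which is legitimate because $0\le Cg(v,\tau_0)\le C\max_{v\ge0}g(v,\tau_0)<\Theta(\zeta_2)$ by part N2 of \textbf{(N)}, while $\Theta:[0,\zeta_2]\to[0,\Theta(\zeta_2)]$ is a strictly increasing continuous bijection with $\Theta(0)=0$ (here $\Theta(0)=-\tilde g(0)=0$ since $g(0,\tau)\equiv0$). Continuity of $G$ and positivity for $s>0$ are then immediate: $g(s,\tau_0)>0$ for $s>0$ (by N1) forces $Cg(s,\tau_0)>0=\Theta(0)$, hence $G(s)=\Theta^{-1}(Cg(s,\tau_0))>0$.

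\emph{Part 1.} For the one-sided derivative I would factor
\[
\frac{G(v)}{v}=\frac{G(v)}{\Theta(G(v))}\cdot\frac{Cg(v,\tau_0)}{v}
\]
and let $v\to0^+$. The second factor tends to $Cg'(0,\tau_0)$, while the first tends to $1/\Theta'(0^+)$ since $w=G(v)\to0^+$. The value $\Theta'(0^+)=1-\tilde g'(0^+)$ is computed from $\tilde g(w)/w=\int_{X\setminus\{\tau_0\}}(g(w,\tau)/w)\,d\mu(\tau)\int_\R K(s,\tau)ds$ by dominated convergence, the domination being exactly what condition \textbf{(C)} provides; this gives $\tilde g'(0^+)=\int_{X\setminus\{\tau_0\}}g'(0,\tau)\,d\mu(\tau)\int_\R K(s,\tau)ds$ and hence $G'(0^+)=Cg'(0,\tau_0)/(1-\tilde g'(0^+))$. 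The key observation is that splitting the atom $\{\tau_0\}$ off the integral defining $\chi$ yields $\chi(0)=1-Cg'(0,\tau_0)-\tilde g'(0^+)$, so that after clearing the positive denominator $1-\tilde g'(0^+)$ the inequality $G'(0^+)>1$ becomes \emph{precisely} the hypothesis $\chi(0)<0$. (Should the degenerate case $1-\tilde g'(0^+)=0$ occur, then $\chi(0)=-Cg'(0,\tau_0)<0$ and $G'(0^+)=+\infty$, so the conclusion holds a fortiori.) Recognising this equivalence is the real content of Part 1.

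\emph{Parts 2 and 3.} The inclusion $G(\R_+)\subseteq[0,\zeta_2]$ will follow at once from the identity, since $Cg(v,\tau_0)\in[0,C\max_{v\ge0}g(v,\tau_0)]\subseteq[0,\Theta(\zeta_2))$ forces $G(v)\in[0,\Theta^{-1}(C\max_{v\ge0}g(v,\tau_0))]$ with right endpoint strictly below $\zeta_2$; in particular $G(\zeta_2)<\zeta_2$. To produce $\zeta_1$ I would combine three facts: $G(0)=0$ together with $G'(0^+)>1$ keeps $G$ strictly above the diagonal on a maximal initial run, $G(\zeta_2)<\zeta_2$ puts $G$ below the diagonal at the right end, and the shape of $G$ recorded just before the lemma (the same critical points as $g(\cdot,\tau_0)$, so $G$ rises to its single maximum $\kappa:=\max G<\zeta_2$ and then falls, cf. Figure~\ref{F1}). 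I would then take $\zeta_1$ on the increasing branch to be the point where $G$ first attains the level $G(\zeta_2)$, cut back below the first positive root of $G(s)=s$ if one precedes it, so that $\zeta_1$ sits inside the super-diagonal run. For such $\zeta_1$ the map increases to $\kappa$ and returns to $G(\zeta_2)\ge G(\zeta_1)$ on $[\zeta_1,\zeta_2]$, giving $\min_{[\zeta_1,\zeta_2]}G=G(\zeta_1)$; the bounds $G\le\kappa\le\zeta_2$ and $G\ge G(\zeta_1)>\zeta_1$ then deliver $G([\zeta_1,\zeta_2])\subseteq[\zeta_1,\zeta_2]$, while $G(s)>s$ on $(0,\zeta_1]$ is built into the choice.

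\emph{Main obstacle.} Part 1 is essentially bookkeeping once \textbf{(C)} licenses dominated convergence, the only subtlety being the degenerate value $\tilde g'(0^+)=1$. The genuine difficulty lies in Parts 2--3: I must arrange for the three demands on $\zeta_1$ to hold at once, namely that $G$ stay strictly above the diagonal throughout $(0,\zeta_1]$, that $\zeta_1<G(\zeta_1)=\min_{[\zeta_1,\zeta_2]}G$, and that $[\zeta_1,\zeta_2]$ be $G$-invariant. Because $G$ need not be monotone and may cross the diagonal more than once, pinning $\zeta_1$ down will require carefully exploiting the interplay of $G'(0^+)>1$, of $G(\zeta_2)<\zeta_2$, and of the unimodal shape inherited from $g(\cdot,\tau_0)$; verifying that the minimum of $G$ over $[\zeta_1,\zeta_2]$ falls exactly at the left endpoint is the step I expect to cost the most effort.
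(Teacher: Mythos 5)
Your Part~1 is correct and is essentially the proof the paper itself gives: the paper proves only this part (``let us show, for instance, that $G'(0+)>1$'') and treats 2--3 as evident from the picture. Your route is the same --- the identity $G'(0+)=Cg'(0,\tau_0)/(1-\tilde g'(0+))$, dominated convergence licensed by \textbf{(C)} to get $\tilde g'(0+)=\int_{X\setminus\{\tau_0\}}g'(0,\tau)\,d\mu(\tau)\int_\R K(s,\tau)\,ds$, and the splitting $\chi(0)=1-Cg'(0,\tau_0)-\tilde g'(0+)$ (using $\mu(\tau_0)=1$) which makes $G'(0+)>1$ literally equivalent to $\chi(0)<0$. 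Your explicit handling of the degenerate case $\tilde g'(0+)=1$ matches the paper's remark that $G'(0+)=+\infty$ is not excluded.

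The gap is in Parts 2--3, exactly where you anticipated it. Your choice of $\zeta_1$ (``the first point where $G$ attains the level $G(\zeta_2)$,'' pushed below the first fixed point if needed) rests on $G$ being unimodal --- rising to a single maximum and then falling. That shape is read off Figure~1, not off the hypotheses: \textbf{(N)} only guarantees $g(v,\tau_0)>0$ for $v>0$, so $G$ may oscillate, and then $\min_{[\zeta_1,\zeta_2]}G$ need not be $\min\{G(\zeta_1),G(\zeta_2)\}$; an interior dip of $G$ below $G(\zeta_1)$, or below $\zeta_1$ itself, destroys the minimality claim in~3 and the invariance in~2. The lemma is nevertheless true without unimodality if you choose the \emph{level} rather than the point. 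Let $t^*$ be the first positive fixed point of $G$ (it exists because $G(s)>s$ near $0$ while $G(\zeta_2)<\zeta_2$, and $G(s)>s$ on all of $(0,t^*)$), put $c:=\min\{t^*,\min_{[t^*,\zeta_2]}G\}>0$, fix $\epsilon\in(0,c)$, and let $\zeta_1$ be the \emph{largest} solution of $G(s)=\epsilon$ in $[0,t^*]$. Then $\zeta_1\in(0,t^*)$; on $(\zeta_1,t^*]$ the function $G$ never takes the value $\epsilon$ and ends at $G(t^*)=t^*\geq c>\epsilon$, so $G>\epsilon$ there, while on $[t^*,\zeta_2]$ one has $G\geq c>\epsilon$; hence $\min_{[\zeta_1,\zeta_2]}G=G(\zeta_1)=\epsilon$. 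Since $\zeta_1<t^*$ you get $G(s)>s$ on $(0,\zeta_1]$, in particular $G(\zeta_1)>\zeta_1$, and together with $G(\R_+)\subseteq[0,\zeta_2)$ (which you derived correctly from $C\max_{v\ge0}g(v,\tau_0)<\Theta(\zeta_2)$) this yields $G([\zeta_1,\zeta_2])\subseteq[\zeta_1,\zeta_2]$. Substituting this argument for the unimodality step closes the gap.
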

\begin{proof} Let us  show, for instance,  that $G'(0+)>1$.  In view of  {\rm \bf (C)}, this derivative exists and  
is equal to $Cg'(0,\tau_0)/(1- \tilde g'(0))$. Thus  $G'(0) >1$ if and only if $\chi(0) <0$.    Observe that 
$\tilde g'(0+) \leq 1$ since $\Theta'(0+)\geq 0$ and we do not exclude the case $G'(0+)=+\infty$. 
\end{proof}
Using the above framework, we can improve  conclusions  of Theorem \ref{thm:per}: 
\begin{theorem}\label{thm:per_unif}
Assume $\mathbf{(N)}$ along with all conditions  of Theorem \ref{thm:per} and take $\zeta_1>0$ as in Lemma \ref{ran}. Let $\phi$ be  a positive bounded solution  of equation (\ref{17}). If $m=\inf_{s\in\R}\phi(s)<\zeta_1$ then $\lim_{t\to \omega}\phi(t)=0$ and $\liminf_{t\to -\omega}\phi(t)> \zeta_1$  for some $\omega \in \{-\infty, +\infty\}$.
\end{theorem}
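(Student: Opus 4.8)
The plan is to combine the end-behaviour dichotomy of Theorem \ref{thm:per} and the no-pulse conclusion of Corollary \ref{pu} with a scalar comparison in which the non-monotone contribution of $\tau_0$ is absorbed into the auxiliary map $G$. Throughout I would split (\ref{17}) into the monotone part over $X\setminus\{\tau_0\}$, which produces $\tilde{g}$, and the single ``bad'' term $\int_\R K(s,\tau_0)g(\phi(t-s),\tau_0)\,ds$. The essential algebraic identity is $Cg(v,\tau_0)=\Theta(G(v))$, so that after applying the increasing map $\Theta^{-1}$ the relevant one-dimensional comparison function is exactly $G$, whose shape is pinned down by Lemma \ref{ran}. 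A preliminary reduction shows that every bounded solution has range in $[0,\zeta_2]$: testing (\ref{17}) at a sequence approaching $\sup\phi$ and using monotonicity of each $g(\cdot,\tau)$, $\tau\neq\tau_0$, together with $g(\cdot,\tau_0)\le\max_{v\ge0}g(v,\tau_0)$ gives $\Theta(\sup\phi)\le C\max_{v\ge0}g(v,\tau_0)<\Theta(\zeta_2)$, whence $\sup\phi\le\zeta_2$ by the strict monotonicity of $\Theta$ supplied by {\bf(N)}.

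Next I would extract the scalar inequalities. For a fixed end $\omega$ at which $\phi$ does not vanish, set $p=\liminf_{t\to\omega}\phi(t)>0$ and $q=\limsup_{t\to\omega}\phi(t)\le\zeta_2$. Choosing $t_n\to\omega$ with $\phi(t_n)\to p$ and letting $n\to\infty$ with $s$ fixed (so that $t_n-s\to\omega$), Fatou's lemma applied to the nonnegative integrand, the monotonicity and continuity of $g(\cdot,\tau)$ for $\tau\neq\tau_0$, and the fact that every subsequential limit of $\phi(t_n-s)$ lies in $[p,q]$, yield $p\ge\tilde{g}(p)+C\min_{v\in[p,q]}g(v,\tau_0)$. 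Rewriting the last term as $\Theta(\min_{v\in[p,q]}G(v))$ and using that $\Theta$ is increasing on $[0,\zeta_2]$ gives the clean relation $p\ge\min_{v\in[p,q]}G(v)$; the dominated (reverse Fatou) counterpart, with the integrable majorant furnished by {\bf(C)}, gives $q\le\max_{v\in[p,q]}G(v)$. The same computation performed at a sequence realising the global infimum $m=\inf_{s}\phi(s)$, together with $M=\sup_s\phi(s)\le\zeta_2$, produces $m\ge\min_{v\in[m,M]}G(v)$.

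I would then read off the conclusion from the geometry of $G$ in Lemma \ref{ran}. First I exclude $0<m<\zeta_1$: for such $m$ and any $v\in[m,M]\subseteq(0,\zeta_2]$ one has $G(v)>v\ge m$ when $v\le\zeta_1$ (since $G>\mathrm{id}$ on $(0,\zeta_1]$) and $G(v)\ge G(\zeta_1)>\zeta_1>m$ when $v\in[\zeta_1,\zeta_2]$; hence $\min_{v\in[m,M]}G>m$, contradicting $m\ge\min_{v\in[m,M]}G$. Therefore $m<\zeta_1$ forces $m=0$. Since $\phi>0$ is continuous, the infimum $0$ is not attained and must be approached along $t_n$ with $|t_n|\to\infty$, so $\liminf_{t\to\omega}\phi(t)=0$ for some $\omega\in\{-\infty,+\infty\}$; by Theorem \ref{thm:per} this gives $\lim_{t\to\omega}\phi(t)=0$, and by Corollary \ref{pu} the opposite end cannot also vanish, so $p=\liminf_{t\to-\omega}\phi(t)>0$. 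Finally the same shape argument upgrades this: if $p\le\zeta_1$, then $G(v)>p$ for every $v\in[p,q]$ (again splitting at $\zeta_1$), so $\min_{v\in[p,q]}G>p$, contradicting $p\ge\min_{v\in[p,q]}G$; hence $\liminf_{t\to-\omega}\phi(t)=p>\zeta_1$, which is exactly the asserted dichotomy.

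The main obstacle is the absence of monotonicity in the $\tau_0$-direction, which blocks any direct sub/supersolution comparison; the device that makes everything go through is the identity $Cg(v,\tau_0)=\Theta(G(v))$, converting the single non-monotone term into the well-understood scalar map $G$ and allowing the limit passages to be closed by Fatou and reverse Fatou. The one point demanding genuine care is the preliminary bound $\sup\phi\le\zeta_2$, on which both the validity of the majorant in {\bf(C)} and the invertibility of $\Theta$ rest; everything downstream is a routine exploitation of the inequalities $G(s)>s$ on $(0,\zeta_1]$ and $\min_{s\in[\zeta_1,\zeta_2]}G(s)=G(\zeta_1)>\zeta_1$.
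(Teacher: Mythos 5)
Your argument is correct and follows essentially the same route as the paper's own proof: the two scalar inequalities $m\ge\min_{v\in[m,M]}G(v)$ and $M\le\max_{v\in[m,M]}G(v)$ (this is exactly Lemma \ref{vg} and Remark \ref{rs}), the shape of $G$ supplied by Lemma \ref{ran}, and the combination of Theorem \ref{thm:per} with Corollary \ref{pu} to locate the vanishing end and rule out a pulse. The only cosmetic difference is that you close the limit passages at the ends by Fatou and reverse Fatou along sequences $t_n\to\omega$, whereas the paper extracts a limiting solution in the compact-open topology and reapplies Lemma \ref{vg}; both versions (including your preliminary bound $\sup\phi\le\zeta_2$, which the paper secures via the modification in Section \ref{s31}) rest on the same monotonicity of $\Theta$ on $[0,\zeta_2]$.
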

Our third result can be considered as a further development of  Theorem 6.1 from \cite{dkMB} which was proved for a single-point space $X$ and under  more restrictive conditions on the nonlinearity $g$:
\begin{theorem}\label{thm:existencia} Assume {\rm \bf(N)}, that  $G'(0)$ is finite and that  $g(s,\tau)\leq g'(0,\tau)s$ for all $s\geq 0,$ $\tau \in X.$ If  $\chi(z), \chi(0) < 0, $ is defined and  changes its sign on some open interval $(0, \bar \omega)$ [respectively, on $(-\bar \omega,0)$],  then equation (\ref{17}) has at least one semi-wavefront, with $\varphi(-\infty)=0, \ \sup_{s \in \R} \varphi (s) \leq \zeta_2$  and $\liminf_{t\rightarrow +\infty}\varphi(t)> \zeta_1$ [respectively, with   $\varphi(+\infty)=0, \ \liminf_{t\to -\infty}\varphi(t)>\zeta_1$].  
Moreover, if 
equation $G(s)=s$ has exactly two solutions $0$
and $\kappa$ on $\R_+$, and the point $\kappa$ is globally attracting with respect to the map $G: (0,\zeta_2] \to (0,\zeta_2] $ then $\phi(+\infty) = \kappa$. 
\end{theorem}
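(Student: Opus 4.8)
The plan is to realise the semi-wavefront as a fixed point of the integral operator
$(\mathcal{A}\phi)(t):=\int_X d\mu(\tau)\int_\R K(s,\tau)g(\phi(t-s),\tau)\,ds$
trapped between an explicit super-solution and sub-solution, and then to read off the asymptotics from Theorems \ref{thm:per} and \ref{thm:per_unif}. I treat only the case of a sign change on $(0,\bar\omega)$, the other following after the reflection $t\mapsto-t$, $s\mapsto-s$. Concavity of $\chi$ with $\chi(0)<0$ and the sign change yield a smallest positive zero $\lambda_1$ of $\chi$, with $\chi>0$ on an interval $(\lambda_1,\lambda_2)$ to its right. Since $\chi(\lambda_1)=0$, the sublinear bound $g(s,\tau)\le g'(0,\tau)s$ gives $\mathcal{A}(e^{\lambda_1\cdot})(t)\le e^{\lambda_1 t}$, so $e^{\lambda_1 t}$ is a super-solution; capping it yields the bounded super-solution $\bar\phi(t)=\min\{e^{\lambda_1 t},\zeta_2\}$, the cap being respected because $\Theta(\zeta_2)>C\max_v g(v,\tau_0)$ forces $\mathcal{A}\phi\le\zeta_2$ whenever $0\le\phi\le\zeta_2$.

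For the lower barrier I would try $\underline\phi(t)=\max\{0,\,e^{\lambda_1 t}-q\,e^{\lambda t}\}$ with $\lambda\in(\lambda_1,\lambda_2)$ and $q$ large. As $\chi(\lambda)>0$, the leading-order computation $\mathcal{A}(e^{\lambda_1\cdot}-qe^{\lambda\cdot})\approx e^{\lambda_1 t}-q(1-\chi(\lambda))e^{\lambda t}$ suggests that $\underline\phi$ is a sub-solution on the left half-line where it is positive, once the remainder of $g$ at $0$ (controlled through the finiteness of $G'(0)$) and the single-point term at $\tau_0$ are absorbed by taking $\lambda$ near $\lambda_1$ and $q$ large. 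With $\underline\phi\le\bar\phi$ in hand, I would run $\mathcal{A}$ on the order interval $\{\phi:\underline\phi\le\phi\le\bar\phi\}$ inside the continuous functions with the weighted norm $\sup_t e^{-\lambda_1 t}|\phi(t)|$, where condition (C) and the convolution structure make $\mathcal{A}$ compact via the Arzel\`a--Ascoli theorem. Non-monotonicity of $g(\cdot,\tau_0)$ is dealt with by splitting $\mathcal{A}$ into the monotone part generated by $\tilde g$ and the single-point part at $\tau_0$, the latter squeezed between its monotone envelopes so that $\mathcal{A}$ leaves the order interval invariant; Schauder's theorem then yields a fixed point $\varphi$. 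By construction $\varphi(-\infty)=0$ and $\sup\varphi\le\zeta_2$, while $\varphi\ge\underline\phi>0$ on a left half-line shows $\varphi\not\equiv0$, so (P) and Lemma \ref{lem:nucleo} give $\varphi>0$ on $\R$.

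Since $\varphi(-\infty)=0$ and $\varphi>0$, we have $m=\inf_\R\varphi=0<\zeta_1$, so Theorem \ref{thm:per_unif} applies; its extinction end must be $\omega=-\infty$, whence $\liminf_{t\to+\infty}\varphi(t)>\zeta_1$, which establishes that $\varphi$ is a semi-wavefront with the stated bounds. For the final assertion put $\ell=\liminf_{t\to+\infty}\varphi$ and $L=\limsup_{t\to+\infty}\varphi$, so $\zeta_1<\ell\le L\le\zeta_2$. A limiting-profile argument — extracting locally uniform limits of the translates $\varphi(\cdot+t_n)$ along $t_n\to+\infty$ realising $\ell$ and $L$ — produces entire solutions of (\ref{17}) valued in $[\ell,L]$; evaluating (\ref{17}) at their extremal points and using the relation $c=G(c)$ for constant solutions yields comparison inequalities tying $\ell$ and $L$ to iterates of $G$ on $[\zeta_1,\zeta_2]$. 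As $\kappa$ is the only fixed point of $G$ there and is globally attracting, these inequalities squeeze $[\ell,L]$ onto $\{\kappa\}$, giving $\varphi(+\infty)=\kappa$.

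The hardest part will be the construction and verification of the sub-solution. Because $g(\cdot,\tau_0)$ is not assumed monotone and only the one-sided estimate $g\le g'(0)s$ is available, $\mathcal{A}$ is not order preserving: neither can I invoke monotone iteration directly, nor is it automatic that $\mathcal{A}$ preserves the order interval $[\underline\phi,\bar\phi]$. Turning $\underline\phi$ into a genuine sub-solution forces me to exploit precisely the sign change of $\chi$ — that is, the availability of $\lambda\in(\lambda_1,\lambda_2)$ with $\chi(\lambda)>0$ — while simultaneously keeping $\underline\phi\le\bar\phi$ and dominating the remainder of $g$ at the origin through the finiteness of $G'(0)$. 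Balancing these two exponents against the non-monotone single-point term at $\tau_0$ is the delicate core of the proof.
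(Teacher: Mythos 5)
Your overall architecture (exponential super-solution built from the leftmost positive zero $\lambda_1$ of $\chi$, a two-exponential sub-solution using a point $\lambda>\lambda_1$ with $\chi(\lambda)>0$, Schauder's theorem on the resulting order interval in a weighted space, then Theorems \ref{thm:per} and \ref{thm:per_unif} for the boundary behaviour and the limiting-profile argument of Remark \ref{rs} for $\phi(+\infty)=\kappa$) is the same as the paper's. But the step you yourself flag as ``the delicate core'' --- verifying that $\underline\phi(t)=\max\{0,\,e^{\lambda_1 t}-q e^{\lambda t}\}$ is a sub-solution --- is a genuine gap, and the tools you propose do not close it. The only lower information on $g$ available is $g\ge 0$ together with the mere existence of $g'(0,\tau)$; this gives $g(s,\tau)=g'(0,\tau)s+o(s)$ with no rate and no uniformity in $\tau$. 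The absorption argument you sketch requires the linearization error $L\underline\phi-\mathcal{A}\underline\phi$ to decay like $e^{(1+\alpha)\lambda_1 t}$ as $t\to-\infty$ for some $\alpha>0$, so that it can be dominated by the gain $q\,\chi(\lambda)e^{\lambda t}$ with $\lambda_1<\lambda<(1+\alpha)\lambda_1$; an error that is merely $o(e^{\lambda_1 t})$ (for instance of size $e^{\lambda_1 t}/|t|$) is \emph{not} dominated by $e^{\lambda t}$ for any $\lambda>\lambda_1$, no matter how large $q$ is. Finiteness of $G'(0)$ does not help: it only controls $\tilde g'(0)<1$, i.e.\ the denominator in $G'(0)=Cg'(0,\tau_0)/(1-\tilde g'(0))$, and says nothing about how fast $g(s,\tau)-g'(0,\tau)s$ vanishes near $s=0$. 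Likewise, ``squeezing the $\tau_0$-term between monotone envelopes'' produces a usable upper envelope but the only lower envelope consistent with the hypotheses is $0$.

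The paper closes exactly this gap by a two-stage scheme that is absent from your proposal. First it proves the theorem under the extra hypothesis {\rm\bf(L)} that $g(s,\tau)=g'(0,\tau)s$ \emph{exactly} for $s\in[0,\delta)$: then on the set where $\phi^-(u)>0$ one has $u<0$ and $\varphi(u)\le\delta e^{\lambda u}\le\delta$, so $g(\varphi(u),\tau)=g'(0,\tau)\varphi(u)\ge g'(0,\tau)\phi^-(u)$ is an identity rather than an estimate, and $\mathcal{A}\varphi\ge L\phi^->\phi^-$ follows with no absorption at all (boundedness of the fixed point is recovered separately via Lemma \ref{bs2}). Second, a general $g$ is reached through the decreasing approximations $\gamma_n$ satisfying {\rm\bf(L)} with $\delta=1/n$, a verification that the associated $G_n$ retain the properties of Lemma \ref{ran}, the normalization $\varphi_n(0)=0.5\zeta_1$ to keep the limit profile from collapsing to $0$, and an Arzel\`a--Ascoli plus dominated-convergence passage to the limit, with Theorem \ref{thm:per_unif} restoring the asymptotics of the limit. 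Without either this linearize-then-approximate scheme or an additional quantitative lower bound on $g$ near the origin, your sub-solution cannot be verified, so the proposal as written does not constitute a proof.
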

\vspace{0mm}
\begin{remark} It is worth noting that the existence of $g'(0,\tau)$  (and consequently of $G'(0)$) is not at all obligatory for the existence of semi-wavefronts.  Indeed, suppose that  there is a measurable $l(\tau)$ satisfying
$g(s,\tau) \leq l(\tau)s, s \geq 0,$ and consider associated characteristics 
$$
\chi_l(z):= 1-\int_X\int_\R K(s,\tau)l(\tau)d\mu(\tau) e^{-sz}ds, \ \tilde g_l':=  \int_{X\setminus\{\tau_0\}}\int_\R K(s,\tau)l(\tau)d\mu(\tau)ds. 
$$
We assume also that  {\rm \bf(N)} holds, $G$ possesses the second and the third properties of Lemma  \ref{ran},  and $\tilde g_l' <1$ (this generalizes assumption $G'(0)\in \R$). Then all conclusions of Theorem \ref{thm:existencia} remain valid if we replace in its formulation $\chi$ with $\chi_l$.  See the second part of Section \ref{SE} for more details. 
\end{remark}
The paper is organized as follows. In Section \ref{s2p},  we
prove the dichotomy principle. The first part of Section \ref{UPP} shows 
how to avoid possible troubles with unbounded solutions of the convolution equation.  The second part of the same section presents a short proof of the uniform persistence property.  These preliminary results are essential for proving the  existence theorem in Section \ref{SE}. Finally,  several applications are considered in the last section of the paper.  
Associated characteristic equation is analyzed in Appendix. 

\section{The proof of the dichotomy principle (Theorem \ref{thm:per})} \label{s2p} \hfill . \vspace{2mm}

1. Let $\phi(t)$ be a bounded  solution of (\ref{17}).  It is easy to see that $\phi(t)$ is uniformly continuous on $\R$. Indeed,  setting $\delta = |\phi|_\infty,$ we find that 
\begin{eqnarray*}
&&|\phi(t+h)-\phi(t)|\leq\int_X d\mu(\tau)\int_\R|K(s+h,\tau)-K(s,\tau)|g(\phi(t-s),\tau)ds\\
&&\leq |\phi|_\infty\int_X C_{\delta}(\tau)d\mu(\tau)\int_\R|K(s+h,\tau)-K(s,\tau)|ds =: |\phi|_\infty \sigma_\delta(h),
\end{eqnarray*}
where  $\lim_{h \to 0} \sigma_\delta(h) = 0$  because of the continuity of translation in $L_1(\R)$ and the Lebesgue's 
dominated convergence theorem.  

2. Next we prove an analog of Proposition \ref{gg} when  $\phi(+\infty) =0$ and $\phi$ is bounded and positive. We have
$$\phi(-t)=\int_Xd\mu(\tau)\int_{\R}K(s,\tau)g(\phi(-t-s),\tau)ds,\quad t\in\R.$$
Set $\psi(t):=\phi(-t)$, then $\psi(-\infty)=0$ and
\begin{equation}\label{eq:two}
\psi(t)=\int_X d\mu(\tau)\int_\R K(-s,\tau)g(\psi(t-s),\tau)ds.
\end{equation}
Let $\chi(z)$ [$\chi_1(z)$] be characteristic equation for Eq. (\ref{17}) [Eq. (\ref{eq:two}), respectively]. 
We have
\begin{eqnarray*}
\chi_1(z)&=&1-\int_X\int_\R K(-s,\tau)g'(0,\tau)d\mu(\tau) e^{-sz}ds\\
&=&1-\int_\R \int_X K(s,\tau)g'(0,\tau)d\mu(\tau)e^{sz}ds=\chi(-z)
\end{eqnarray*}
and  thus $\chi_1(0)=\chi(0) < 0$.  By Proposition \ref{gg},   $\chi_1(z)$  has at least one positive root. 
Therefore $\chi(z)$ has at least one negative zero. 

3. Now, let suppose that  $\limsup_{t\rightarrow+\infty}\phi(t)=S>0$ and  $\liminf_{t\rightarrow+\infty}\phi(t)=0$. Since $\chi(0) <0$ and
$\chi$ is concave on its maximal domain of definition,  all real zeros of $\chi $ should be of the same sign (if they exist).   Suppose that $\chi$ does not have any real  negative [{\sf respectively, positive}] root. 
For a fixed $j>S^{-1}$ there exists a sequence of intervals $[p_i,q_i]$, $\lim p_i=+\infty$, such that $\phi(p_i)=1/j$, $\lim \phi(q_i)=0$  [{\sf respectively, $\phi(q_i)=1/j$, $\lim \phi(p_i)=0$}]  and $\phi(t)\leq 1/j$, $t\in[p_i,q_i]$ . Note that $\limsup_{i\rightarrow+\infty}(q_i-p_i)=+\infty$. Indeed, otherwise we can suppose that $\lim_{i\rightarrow+\infty}(q_i-p_i)=\sigma>0$. By the pre-compactness of $\{\phi(t+s);s\in\R\}$ in the compact-open topology of $C(\R)$, the sequence $w_i(t):=\phi(t+p_i)$ [{\sf respectively, $w_i(t):=\phi(t+q_i)$}]  of solutions to Eq. (\ref{17}) contains a subsequence converging to a non-negative bounded function $w_*(t)$ such that $w_*(0)=1/j$, $w_*(\sigma)w_*(-\sigma)=0$.  Since, due to the Lebesgue's   dominated convergence theorem, $w_*(t)$ satisfies (\ref{17}) as well, this contradicts to {\rm \bf(P)}. Thus $q_i-p_i\rightarrow+\infty$ and we can suppose that $w_i(t)$ has a subsequence converging to a bounded positive solution $w_*(t)$ of (\ref{17}) satisfying $0<w_*(t)\leq 1/j$ for all $t\geq 0$  [{\sf respectively, for all $t \leq 0$}]. Since $w_*(+\infty)=0$  [{\sf respectively, $w_*(-\infty)=0$}]   is impossible due to Proposition \ref{gg} and the second step of the proof, we conclude that $0<S^*=\limsup_{t\rightarrow +\infty}w_*(t)\leq 1/j$ [{\sf respectively, $0<S^*=\limsup_{t\rightarrow -\infty}w_*(t)\leq 1/j$}]. Let $r_i\rightarrow+\infty $ [{\sf respectively, $r_i \to -\infty$}] be such that $w_*(r_i)\rightarrow S^*$, then $w_*(t+r_i)$ has a subsequence converging to a positive solution $\zeta_j:\R\rightarrow[0,1/j]$ of (\ref{17}) such that $\max_{t\in\R}\zeta_j(t)=\zeta_j(0)=S^*\leq 1/j$. Now, let us consider $y_j(t)=\zeta_j(t)/\zeta_j(0)$.  Each $y_j$ satisfies
\begin{equation}\label{eq:linealj}
y_j(t)=\int_Xd\mu(\tau)\int_\R K(s,\tau)a_j(t-s, \tau)y_j(t-s)ds,
\end{equation}
where $a_j(t, \tau)=g(\zeta_j(t),\tau)/\zeta_j(t)$.
We claim that $\{y_j(t)\}$ has a subsequence converging to a continuous solution $y_*:\R\rightarrow [0,1], \ y_*(0)=1,$ of equation  
\begin{equation}\label{eq:lineal}
y_*(t)=\int_X g'(0,\tau)d\mu(\tau)\int_\R K(s,\tau)y_*(t-s)ds.
\end{equation}
Indeed,  the sequence $\{y_j(t)\}_{j=1}^{+\infty}$ is equicontinuous  because of  
\begin{eqnarray*}
\hspace{-6mm} |y_j(t+h)-y_j(t)|\leq\int_X d\mu(\tau)\int_\R a_j(t-s)y_j(t-s)|K(s+h,\tau)-K(s,\tau)|ds\\
\hspace{-6mm} \leq\int_Xd\mu(\tau)\int_\R a_j(t-s)|K(s+h,\tau)-K(s,\tau)|ds \leq \sigma_1(h), \label{cota}
\end{eqnarray*}
where $\sigma_\delta$ was defined on step 1. In addition, 
$$
\left|\int_\R K(s,\tau)a_j(t-s, \tau)y_j(t-s)ds\right|\leq C_1(\tau)\int_\R K(s,\tau)ds \in L_1(X),
$$ 
so that,  by the Lebesgue's dominated convergence theorem, we can pass to the limit (as $j\rightarrow\infty$) in (\ref{eq:linealj}).  Hence, our claim is proved. 

4. The proof of Theorem  \ref{thm:per}  will be finalized, if we show that  (\ref{eq:lineal}) cannot have any nontrivial  continuous solution 
$y_*\geq 0$.  Since $$\int_X g'(0,\tau)d\mu(\tau)\int_\R K(s,\tau)ds>1$$ there exists $N>0$ such that $$\rho:=\int_X g'(0,\tau)d\mu(\tau)\int_{-N}^N K(s,\tau)ds>1.$$  
Integrating equation  (\ref{eq:lineal}) between $t'$ and $t> t'$, we obtain 
\begin{eqnarray*}\hspace{-9mm}
\int_{t'}^{t}y_*(v)dv\geq &&
 \int_Xg'(0,\tau) d\mu(\tau)\int_{-N}^NK(s,\tau)\int^t_{t'}y_*(v-s)dvds
 \\
=&& \int_Xg'(0,\tau)d\mu(\tau)\int_{-N}^NK(s,\tau)(\int^{t'}_{t'-s}+ \int^{t}_{t'}+\int^{t-s}_{t})y_*(v)dvds,\end{eqnarray*}
from which 
$$
\int_{t'}^{t}y_*(v)dv\leq \frac{2 \int_X\int_{-N}^N |s|K(s,\tau)g'(0,\tau)dsd\mu(\tau)}{\int_X\int_{-N}^N K(s,\tau)g'(0,\tau)dsd\mu(\tau)-1}, \quad t' < t. 
$$
Therefore $y_* \in L_1(\R)$.  Now we easily get a contradiction by integrating (\ref{eq:lineal}) over the real line: 
$$
0 < \int_\R y_*(v)dv=\left[\int_X g'(0,\tau)d\mu(\tau)\int_\R K(s,\tau)ds\right] \int_\R y_*(v)dv.  
$$
Hence, the dichotomy principle of  Theorem \ref{thm:per} is established at $ +\infty$. The other case  can be reduced to the previous one by doing the change of variables $\psi (t) := \phi(-t)$ and considering equation 
(\ref{eq:two}) with $\chi_1$ instead of  (\ref{17}) with $\chi$.  \hfill $\square$

\section{The uniform permanence property} \label{UPP}
\subsection{The uniform boundedness of solutions.} \label{s31}
It should be noted that, in general, equation (\ref{17}) might have unbounded continuous solutions.  Corresponding examples can be constructed by taking appropriate linear $g(u,\tau)$.  
Nevertheless, as we show in the continuation,   with conditions  ({\bf N})  and $\chi(0)<0$ being  assumed, 
it is easy to avoid eventual troubles with unbounded solutions  in the following two ways: \\
\underline {\bf Modification of  the convolution equation.}  
Consider $$\bar{g}(u,\tau)=\min\{g(u,\tau),g(\zeta_2,\tau)\},\,\tau\neq\tau_0,\,\,\bar{g}(u,\tau_0):=g(u,\tau_0)$$ and $$\bar{g}(v)=\int_{X\setminus\{\tau_0\}}\bar{g}(v,\tau)d\mu(\tau)\int_R K(s,\tau)ds.$$ Then $\bar\Theta(s):=s-\bar{g}(s)$ is a strictly increasing function. Indeed, $\bar{\Theta}(s)=\Theta(s)$, $0\leq s\leq \zeta_2,$ and we know that $\Theta(s)$ strictly increases in $[0,\zeta_2]$.  Furthermore, for $s\geq \zeta_2$, we have
$\bar{\Theta}(s)=s-\bar{g}(s)=s-\bar{g}(\zeta_2)$ where $\bar{g}(\zeta_2)$ is a constant. Hence $\bar{\Theta}(s)$ is strictly increasing on $\R_+$.
If we set $\bar{G}(v)=\bar{\Theta}^{-1}(C\bar{g}(v,\tau_0))$, we find that $\bar{G}(v)= G(v) \leq \zeta_2$ 
for $v\geq 0$.

Let us consider now a modified convolution equation
\begin{equation*}
\phi(t)=\int_X d\mu(\tau)\int_\R K(s,\tau)\bar{g}(\phi(t-s),\tau)ds.
\end{equation*}
Each its solution $\phi(t)$ is bounded;
\begin{equation*}
\phi(t)\leq\bar{g}(\zeta_2)+C\max_{v\geq 0}g(v,\tau_0)<\bar{g}(\zeta_2)+\Theta(\zeta_2)=\zeta_2.
\end{equation*}
The latter estimate assures that $\phi(t)$  simultaneously satisfies   (\ref{17}).

\underline {\bf  Subexponential solutions.}  Assume additionally that 
\begin{equation}\label{sl}
g(u,\tau)\leq g'(0,\tau)u, \ u \geq 0,  \ {\rm for \ each \ } \tau\neq\tau_0.
\end{equation}
If, for some $\lambda >0$, $\phi$ satisfies (\ref{17}) and $\phi(t) \leq \delta e^{\lambda t}, t \in \R$, then
\begin{equation}\label{eq:lineal2}
\phi(t)\leq\int_{X\setminus\{\tau_0\}}d\mu(\tau)\int_\R K(s,\tau)g'(0,\tau)\phi(t-s)ds+\rho
\end{equation}
where $\rho:=\sup_{u\geq 0}g(u,\tau_0)\int_\R K(s,\tau_0)ds \leq \Theta(\zeta_2)$. Suppose, in addition, that $$\theta:=\int_{X\setminus\{\tau_0\}}d\mu(\tau)\int_\R K(s,\tau)g'(0,\tau)e^{-\lambda s}ds<1$$ and $\gamma:=\int_{X\setminus\{\tau_0\}}d\mu(\tau)\int_\R K(s,\tau)g'(0,\tau)ds<1$.
The first inequality holds automatically if $\chi(\lambda)=0$ because of  $\int_\R K(s,\tau_0)g'(0,\tau_0)e^{-\lambda s}ds >0$.  Similarly, since $\gamma = \tilde g'(0)$,  the second inequality holds whenever  $G'(0+)$ is finite. 
\begin{lemma} \label{bs2}
If (\ref{sl}) holds, $\chi(\lambda) =0$ and $G'(0)$ is a finite number  then each solution $\phi(t) \leq \delta e^{\lambda t}$ of (\ref{17}) is bounded. In fact, $$0\leq\phi(t)\leq \min\{\zeta_2, \sup_{u\geq 0}g(u,\tau_0) \frac{G'(0)}{g'(0,\tau_0)}\},\quad  t\in\R.$$
\end{lemma}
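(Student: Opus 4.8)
The plan is to bootstrap the linear integral inequality (\ref{eq:lineal2}). I would write it as $\phi \le \mathcal{L}\phi + \rho$, where $\mathcal{L}$ is the positive linear convolution operator $(\mathcal{L}\psi)(t) := \int_{X\setminus\{\tau_0\}} d\mu(\tau)\int_\R K(s,\tau)g'(0,\tau)\psi(t-s)\,ds$ and $\rho = C\sup_{u\ge0}g(u,\tau_0) \le \Theta(\zeta_2)$. First I would record two elementary facts about $\mathcal{L}$. Since $\chi(\lambda)=0$, the exponential $e^{\lambda\,\cdot}$ is a (sub)eigenfunction: $\mathcal{L}[e^{\lambda\,\cdot}](t)=\theta e^{\lambda t}$ with $\theta<1$. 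Since $G'(0)$ is finite, $\mathcal{L}$ sends constants to constants with factor $\mathcal{L}[\mathbf 1]=\gamma=\tilde g'(0)<1$. These are exactly the two hypotheses of the lemma, reformulated as contraction-type properties of $\mathcal{L}$.

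Because $\mathcal{L}$ is order-preserving ($K\ge0$, $g'(0,\cdot)\ge0$), I would apply it repeatedly to $\phi\le\mathcal{L}\phi+\rho$, obtaining $\phi(t)\le\mathcal{L}^n\phi(t)+\rho(1+\gamma+\dots+\gamma^{n-1})$. The a priori majorant $\phi(t)\le\delta e^{\lambda t}$ is preserved under $\mathcal{L}$, so $\mathcal{L}^n\phi(t)\le\delta\theta^n e^{\lambda t}\to0$ for each fixed $t$ as $n\to\infty$, while the geometric series converges to $\rho/(1-\gamma)$. Hence $\phi(t)\le\rho/(1-\gamma)<\infty$, which simultaneously establishes boundedness and the explicit constant. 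It then remains to identify this constant with the stated one: from the computation in the proof of Lemma \ref{ran}, $G'(0)=Cg'(0,\tau_0)/(1-\gamma)$, whence $\rho/(1-\gamma)=\sup_{u\ge0}g(u,\tau_0)\,G'(0)/g'(0,\tau_0)$, the second entry of the claimed minimum.

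For the companion bound $\phi\le\zeta_2$, I would feed the newly obtained boundedness back into the exact equation (\ref{17}). Using monotonicity of $g(\cdot,\tau)$ for $\tau\ne\tau_0$ (hypothesis N1) and $g(\cdot,\tau_0)\le\max_v g(v,\tau_0)$, and writing $m:=\sup_t\phi(t)$, one gets $m\le\tilde g(m)+C\max_v g(v,\tau_0)$, that is $\Theta(m)\le C\max_v g(v,\tau_0)<\Theta(\zeta_2)$ by (N2). Comparing through the strictly increasing function $\Theta$ — extended, if needed, to the function $\bar\Theta$ which is strictly increasing on all of $\R_+$, agrees with $\Theta$ on $[0,\zeta_2]$, and satisfies $\bar\Theta(\zeta_2)=\Theta(\zeta_2)$ — forces $m\le\zeta_2$. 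Combined with the previous paragraph this yields $0\le\phi(t)\le\min\{\zeta_2,\ \sup_{u\ge0}g(u,\tau_0)G'(0)/g'(0,\tau_0)\}$.

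The main obstacle is twofold. The limit passage in the bootstrap is legitimate only because the exponential majorant forces the tail $\mathcal{L}^n\phi$ to vanish; this is precisely where $\chi(\lambda)=0$ (hence $\theta<1$) is indispensable, and it must be checked that $\mathcal{L}^n\phi$ stays finite at every stage — guaranteed by $\phi\le\delta e^{\lambda t}$ — so the monotone iteration does not degenerate. The more delicate point is reconciling the two bounds in the third paragraph: the monotone estimate controls only $\Theta(m)$, and since (N2) asserts strict monotonicity of $\Theta$ merely on $[0,\zeta_2]$, ruling out $m>\zeta_2$ genuinely requires the globally increasing modification $\bar\Theta$ together with the fact that the capping $\bar g$ leaves the equation for $\phi$ unchanged on the relevant range. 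Making this comparison airtight is the step demanding the most care.
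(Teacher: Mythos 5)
Your first two paragraphs reproduce the paper's own argument for the main estimate: iterate the inequality (\ref{eq:lineal2}) under the majorant $\phi(t)\le\delta e^{\lambda t}$ to get $\phi(t)\le\delta e^{\lambda t}\theta^{n}+\rho(1+\gamma+\dots+\gamma^{n})$, let $n\to\infty$ using $\theta<1$ (from $\chi(\lambda)=0$) and $\gamma=\tilde g'(0)<1$ (from $G'(0)$ finite), and identify $\rho/(1-\gamma)$ with $\sup_{u\ge0}g(u,\tau_0)\,G'(0)/g'(0,\tau_0)$ via $G'(0)=Cg'(0,\tau_0)/(1-\tilde g'(0))$. This is exactly the paper's proof and it is the substantive content of the lemma; that part is fine.

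For the bound by $\zeta_2$ the paper simply invokes Lemma \ref{vg}: once $\phi$ is known to be bounded, $[\inf\phi,\sup\phi]\subseteq G([\inf\phi,\sup\phi])$, and since $G(\R_+)\subseteq[0,\zeta_2]$ (Lemma \ref{ran}, property 2) this yields $\sup\phi\le\zeta_2$. You instead redo that computation and correctly flag the delicate point, namely that $\Theta$ is only assumed increasing on $[0,\zeta_2]$, so $\Theta(m)<\Theta(\zeta_2)$ does not by itself exclude $m>\zeta_2$; but your proposed repair through $\bar\Theta$ does not close the gap. Since $\bar g(u,\tau)=\min\{g(u,\tau),g(\zeta_2,\tau)\}\le g(u,\tau)$, one has $\bar\Theta\ge\Theta$ pointwise, so the inequality you actually control, $\Theta(m)\le C\sup_{v\ge0}g(v,\tau_0)<\Theta(\zeta_2)=\bar\Theta(\zeta_2)$, gives no upper bound on $\bar\Theta(m)$; and estimating $\phi$ through $\bar g$ rather than $g$ in equation (\ref{17}) would require knowing $\phi\le\zeta_2$ beforehand, which is circular. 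To make this step airtight you should either cite Lemma \ref{vg} together with $G(\R_+)\subseteq[0,\zeta_2]$ as the paper does, or work from the outset with the modified equation of Section \ref{UPP}, whose solutions are bounded by $\zeta_2$ by construction and also solve (\ref{17}).
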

\begin{proof}
Using $\phi(t)\leq\delta e^{\lambda t}$  in (\ref{eq:lineal2}) and arguing by induction, we find that $$\phi(t)\leq\delta e^{\lambda t}\theta^n+\rho+ \rho\gamma+\rho\gamma^2+\ldots+\rho\gamma^n.$$ Then, by passing to the limit as $n\rightarrow\infty$, we obtain the required estimate.  We recall here that $\gamma = \tilde g'(0)$, $G'(0)= Cg'(0,\tau_0)/(1- \tilde g'(0))$ and 
$C=  \int_{\R}K(s,\tau_0)ds$.  The inequality $\phi(t) \leq \zeta_2$ follows from Lemma \ref{vg} proved in continuation.  
\end{proof}
\subsection{The proof of the uniform persistence (Theorem \ref{thm:per_unif})}  
 Let $\phi$ a bounded positive solution of the equation (\ref{17}).  Set $$0\leq m:=\inf_{t\in\R}\phi(t)\leq\sup_{t\in\R}\phi(t)=: M<+\infty.$$
\begin{lemma}\label{vg} $[m,M] \subseteq G([m,M])$.
\end{lemma}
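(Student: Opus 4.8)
The plan is to reduce the set inclusion to two scalar inequalities and then read them off from the equation after isolating the atom at $\tau_0$. Since $g(\cdot,\tau_0)$ is continuous and $[m,M]$ is compact, the quantities $g_{\min}:=\min_{v\in[m,M]}g(v,\tau_0)$ and $g_{\max}:=\max_{v\in[m,M]}g(v,\tau_0)$ are attained at some $v_*,v^*\in[m,M]$. Because $\Theta^{-1}$ is increasing, $G=\Theta^{-1}(C\,g(\cdot,\tau_0))$ is a monotone transform of $g(\cdot,\tau_0)$, so $\min_{[m,M]}G=G(v_*)=\Theta^{-1}(Cg_{\min})$ and $\max_{[m,M]}G=G(v^*)=\Theta^{-1}(Cg_{\max})$. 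As $G$ is continuous (Lemma \ref{ran}), $G([m,M])$ is exactly the interval $[G(v_*),G(v^*)]$, and therefore it suffices to prove
\[
G(v_*)\le m\quad\text{and}\quad M\le G(v^*).
\]

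The key step is to split the convolution using $\mu(\{\tau_0\})=1$:
\[
\phi(t)=\int_{X\setminus\{\tau_0\}}d\mu(\tau)\int_\R K(s,\tau)g(\phi(t-s),\tau)\,ds+\int_\R K(s,\tau_0)g(\phi(t-s),\tau_0)\,ds.
\]
For every $\tau\neq\tau_0$ the map $g(\cdot,\tau)$ is increasing and $m\le\phi\le M$, so the first summand lies between $\tilde g(m)$ and $\tilde g(M)$; for the second summand $\phi(t-s)\in[m,M]$ and $\int_\R K(s,\tau_0)ds=C$ give the bounds $Cg_{\min}$ and $Cg_{\max}$. Thus $\tilde g(m)+Cg_{\min}\le\phi(t)\le\tilde g(M)+Cg_{\max}$ for all $t$. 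Taking the infimum and the supremum over $t\in\R$ yields
\[
\Theta(m)=m-\tilde g(m)\ge Cg_{\min},\qquad \Theta(M)=M-\tilde g(M)\le Cg_{\max}.
\]

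Finally I would apply the increasing inverse $\Theta^{-1}$. By {\bf (N)} (part N2) one has $Cg_{\min}\le Cg_{\max}\le C\max_{v\ge0}g(v,\tau_0)<\Theta(\zeta_2)$, so both right-hand sides lie in the range $[0,\Theta(\zeta_2))$ on which $\Theta^{-1}$ is defined; rewriting $Cg_{\min}=\Theta(G(v_*))$ and $Cg_{\max}=\Theta(G(v^*))$ turns the two inequalities into $\Theta(m)\ge\Theta(G(v_*))$ and $\Theta(M)\le\Theta(G(v^*))$, whence $m\ge G(v_*)$ and $M\le G(v^*)$ by monotonicity, which with the first paragraph gives $[m,M]\subseteq G([m,M])$. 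The delicate point — and the main obstacle — is the legitimacy of this last comparison: $\Theta$ is only guaranteed strictly increasing on $[0,\zeta_2]$, so one must first know that $m$ and $M$ themselves lie in $[0,\zeta_2]$ before comparing $\Theta(m),\Theta(M)$ with $\Theta$ evaluated at the $G$-values. The lower end is harmless, but bounding $M$ means controlling $\tilde g(M)$, and this is exactly where the growth restrictions available when the lemma is invoked (the linear control (\ref{sl}), or equivalently passing to the modified nonlinearity $\bar g$ whose associated $\bar\Theta$ is strictly increasing on all of $\R_+$) must be used; once this is secured, the inclusion itself returns the bound $[m,M]\subseteq G(\R_+)\subseteq[0,\zeta_2]$ that is needed later in Lemma \ref{bs2}.
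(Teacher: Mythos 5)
Your argument is correct and essentially identical to the paper's: the paper likewise splits off the atom at $\tau_0$, bounds the non-atomic part by $\tilde g(m)$ and $\tilde g(M)$ via the monotonicity of $g(\cdot,\tau)$, $\tau\neq\tau_0$, deduces $\Theta(m)\ge C\min_{v\in[m,M]}g(v,\tau_0)$ and $\Theta(M)\le C\max_{v\in[m,M]}g(v,\tau_0)$, and inverts $\Theta$. The delicate point you flag --- that inverting $\Theta$ requires knowing $m,M\in[0,\zeta_2]$, which must be secured beforehand by the modification $\bar g$ or the a priori bounds of Section \ref{s31} --- is genuine, and the paper passes over it silently in its one-line conclusion.
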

\begin{proof} Let $\{t_j\}$ be such that $M_j:= \phi(t_j) \to M$. 
We have
\begin{eqnarray*}
&&\phi(t_j)=M_j\leq\int_X\max_{v\in[m,M]}{g(v,\tau)}d\mu(\tau)\int_{\R}K(s,\tau)ds\\
&&=\max_{v\in[m,M]}\int_{X\setminus\{\tau_0\}}g(v,\tau)d\mu(\tau)\int_{\R}K(s,\tau)ds+\max_{v\in[m,M]}g(v,\tau_0)\int_{\R}K(s,\tau_0)ds\\
&&=\tilde{g}(M)+\max_{v\in[m,M]}g(v,\tau_0)\int_{\R}K(s,\tau_0)ds.
\end{eqnarray*}
Thus  $M\leq \max_{v\in[m,M]}G(v)$. Similarly,  $m\geq \min_{v\in[m,M]}G(v)$. 
\end{proof}
Now, assumption $\mathbf{(N)}$, $G'(0)>1$  and ${m} < \zeta_1$ yield  ${m} = 0$, cf. Fig. \ref{F1}.  
Hence, due to the positivity of $\phi(t)$, there exists 
$\omega \in \{-\infty, +\infty\}$ such that 
$\liminf_{t \to \omega} \phi(t)=0$. Then, applying Theorem \ref{thm:per} and Corollary \ref{pu}, we find that  $\phi(\omega) = 0$ and $\mu:= \liminf_{t \to -\omega} \phi(t) >0$.  Making use of our standard limiting solution argument, we see that, 
for some $t_j \to -\omega$, the sequence $\phi(t+t_j)$ is converging in the compact-open topology of $C(\R)$ to
some function $\phi_1(t),\   \mu: = \inf_{t \in \R} \phi_1(t) \leq \sup_{t\in \R} \phi_1(t) \leq M$  solving 
equation (\ref{17}).  By Lemma \ref{vg}, we have $[\mu,{M}] \subseteq G([\mu,{M}])$ which implies 
$\mu > \zeta_1$.    \hfill $\square$
\begin{remark} \label{rs}The last argument in the proof of Lemma \ref{vg} shows also  that $[m',M'] \subseteq G([m',M'])$, where 
 $m':=\liminf_{t\to\omega}\phi(t)\leq\lim\sup_{t\to\omega}\phi(t)=: M'$ and $\omega \in \{-\infty, +\infty\}$. 
\end{remark}
\section{The proof of the existence} \label{SE}
Throughout all this section, we are assuming that $\mathbf{(N)}$ holds,  $\chi(0)<0$ and   
\begin{equation}\label{zab}
g(s,\tau)\leq g'(0,\tau)s \ {\rm \  for\ all\ } \  s\geq 0, \  \tau \in X.
\end{equation}
1. For a moment, let us suppose additionally  that 
\vspace{2mm}

${\rm \bf(L)}$ $g:(0,\infty)\times X\rightarrow(0,+\infty)$ is bounded and  uniformly linear in some right neighborhood of the origin: $g(s,\tau)=g'(0,\tau)s$, $s\in[0,\delta)$,  $\tau \in X$.

\vspace{2mm}

Let $\lambda \in (0, \bar \omega)$ be the leftmost  positive solution of equation $\chi(z)=0$, and set 
\begin{eqnarray*}
X&:=&\{\varphi\in C(\R,\R):\|\varphi\|=\sup_{s\leq 0}e^{-0.5\lambda s}|\varphi(s)|+\sup_{s\geq 0}e^{-\nu s}|\varphi(s)|<+\infty\};\\
\mathfrak{ K }&:=&\{\varphi\in X; \phi^-(t)=\delta e^{\lambda t}(1-e^{\epsilon t})\chi_{\R_{-}}(t)\leq\varphi(t)\leq\delta e^{\lambda t}=\phi^+(t), t\in\R\}, 
\end{eqnarray*}
where $\epsilon >0$ and $\nu:=\lambda+\epsilon< \bar \omega$ are such that $\chi(\nu) >0$. We want to prove the existence of fixed points $\varphi$, $\varphi\in\mathfrak{K}$, $\sup_{s \in \R}\varphi(s)<+\infty$,  to the operator
$$\mathcal{A}\varphi(t)=\int_Xd\mu(\tau)\int_\R K(s,\tau)g(\varphi(t-s),\tau)ds.$$
A formal linearization of $\mathcal{A}$ along the trivial steady state is given by
$$L\varphi(t)=\int_X d\mu(\tau)\int_{\R}K(s,\tau)g'(0,\tau)\varphi(t-s)ds.$$  
\begin{eqnarray*} \hspace{-9mm}  \mbox{We have that }
L\phi^+(t)&=&\int_X d\mu(\tau)\int_\R K(s,\tau)g'(0,\tau)\delta e^{\lambda(t-s)}ds\\
&=&\delta e^{\lambda t}\int_X g'(0,\tau)d\mu(\tau)\int_\R K(s,\tau)e^{-\lambda s}ds=\delta e^{\lambda t}=\phi^+(t).
\end{eqnarray*}
On the other hand, $L\phi^-(t)>\phi^-(t)$, $t\in\R$.  Indeed, we have, for a fixed  $t\leq 0$, 
\begin{eqnarray*}\hspace{-0.7cm}
\delta^{-1}L\phi^-(t)&=&\int_Xd\mu(\tau)\int_t^{+\infty}K(s,\tau)g'(0,\tau)(e^{\lambda(t-s)}-e^{\nu(t-s)})ds\\
&\geq&\int_Xd\mu(\tau)\int_\R K(s,\tau) g'(0,\tau)(e^{\lambda(t-s)}-e^{\nu(t-s)})ds\\
&=&e^{\lambda t}-e^{\nu t}(1-\chi(\nu))=e^{\lambda t}-e^{\nu t}+e^{\nu t}\chi(\nu)> e^{\lambda t}-e^{\nu t}=\delta^{-1}\phi^-(t).
\end{eqnarray*}
\begin{lemma} \label{pcc}
$\mathfrak{K}$ is a closed, bounded, convex subset of $X$ and $\mathcal{A}:\mathfrak{K}\rightarrow \mathfrak{K}$ is a completely continuous map.
\end{lemma}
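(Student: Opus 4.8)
The plan is to verify in turn the three geometric properties of $\mathfrak{K}$ and the two analytic properties of $\mathcal{A}$ (invariance and complete continuity), postponing the only genuinely delicate point — compactness on the unbounded domain — to the end. The geometric properties are routine. Convexity is immediate, since the constraint $\phi^-(t)\le\varphi(t)\le\phi^+(t)$ is preserved by convex combinations. For boundedness I would use $\phi^-\ge0$, which forces $0\le\varphi(t)\le\delta e^{\lambda t}$ for every $\varphi\in\mathfrak{K}$, so that $e^{-0.5\lambda s}|\varphi(s)|\le\delta e^{0.5\lambda s}\le\delta$ on $s\le0$ and $e^{-\nu s}|\varphi(s)|\le\delta e^{-\epsilon s}\le\delta$ on $s\ge0$, giving $\|\varphi\|\le2\delta$. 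Closedness follows because convergence in $\|\cdot\|$ implies uniform convergence on every compact interval (there the weights are bounded below by a positive constant), so the two pointwise inequalities pass to the limit.

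Next I would establish the invariance $\mathcal{A}:\mathfrak{K}\to\mathfrak{K}$. Non-negativity $\mathcal{A}\varphi\ge0$ is clear from $K,g\ge0$. For the upper barrier I would combine the standing sublinearity (\ref{zab}) with monotonicity of the positive linear operator $L$: for $\varphi\le\phi^+$ one gets
$$\mathcal{A}\varphi(t)\le L\varphi(t)\le L\phi^+(t)=\phi^+(t),$$
the last equality being the identity computed just above. The lower barrier is where hypothesis \textbf{(L)} enters. For $t>0$ there is nothing to prove since $\phi^-(t)=0\le\mathcal{A}\varphi(t)$, so I would fix $t\le0$, discard the non-negative part of the integral over $s<t$ (on which $\phi^-(t-s)$ vanishes anyway), and note that on the remaining range $s\ge t$ one has $t-s\le0$, hence $0\le\varphi(t-s)\le\delta e^{\lambda(t-s)}\le\delta$. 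Thus the argument lies in the linear neighbourhood of the origin, where \textbf{(L)} gives $g(\varphi(t-s),\tau)=g'(0,\tau)\varphi(t-s)\ge g'(0,\tau)\phi^-(t-s)$; integrating reproduces exactly $L\phi^-(t)$, and the strict inequality $L\phi^-(t)>\phi^-(t)$ already verified then closes the estimate.

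Finally I would treat complete continuity. For relative compactness of $\mathcal{A}(\mathfrak{K})\subseteq\mathfrak{K}$ I would run a weighted Arzel\`a--Ascoli argument: membership in $\mathfrak{K}$ forces the weighted profiles to be dominated by $\delta e^{0.5\lambda s}$ on $s\le0$ and by $\delta e^{-\epsilon s}$ on $s\ge0$, so their tails are uniformly small, and relative compactness reduces to local equicontinuity. The latter I would obtain exactly as in Step 1 of Section \ref{s2p}: using $g(\varphi(t-s),\tau)\le g'(0,\tau)\delta e^{\lambda(t-s)}$ and rewriting $K(s+h,\tau)e^{-\lambda s}=e^{\lambda h}K(s+h,\tau)e^{-\lambda(s+h)}$, the increment $|\mathcal{A}\varphi(t+h)-\mathcal{A}\varphi(t)|$ on $|t|\le T$ is controlled by a modulus tending to $0$ with $h$, by the $L_1$-continuity of translation applied to the integrable kernel $K(s,\tau)g'(0,\tau)e^{-\lambda s}$ (integrability being guaranteed because $\chi(\lambda)=0$ is well defined). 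Continuity of $\mathcal{A}$ I would then deduce from dominated convergence: if $\varphi_n\to\varphi$ in $X$ then $\varphi_n\to\varphi$ pointwise with integrable majorant $g'(0,\tau)\delta e^{\lambda(t-s)}$, whence $\mathcal{A}\varphi_n\to\mathcal{A}\varphi$ pointwise, and this upgrades to $\|\cdot\|$-convergence via the relative compactness just proved together with uniqueness of the pointwise limit.

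I expect the main obstacle to be the compactness step rather than the invariance. The sub/supersolution bounds are a clean computation once \textbf{(L)} is invoked to place the relevant arguments $\varphi(t-s)$, $s\ge t$, inside the linear regime. The compactness, by contrast, is carried out on the non-compact domain $\R$, and the point that makes it work is precisely that the definition of $\mathfrak{K}$ already encodes uniform exponential decay of the weighted profiles at both infinities, so that the non-compactness of $\R$ is tamed and the problem reduces to ordinary local equicontinuity.
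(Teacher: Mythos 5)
Your proposal is correct and follows essentially the same route as the paper: the upper barrier via the sublinearity $\mathcal{A}\varphi\le L\varphi\le L\phi^+=\phi^+$, the lower barrier via hypothesis \textbf{(L)} placing $\varphi(t-s)$ in the linear regime so that $\mathcal{A}\varphi\ge L\phi^->\phi^-$, and compactness by reducing the weighted norm to uniform convergence on compacts (thanks to the uniform exponential decay of the weighted profiles encoded in $\mathfrak{K}$) followed by the equicontinuity estimate from the $L_1$-continuity of translation. The only differences are organizational — you split the lower-barrier argument by the range of $s$ rather than by cases on $\phi^-(u)$, and you spell out the tail estimate that the paper leaves implicit — neither of which changes the substance.
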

\begin{proof} It is clear that $\mathfrak{K}$ is a closed, bounded, convex subset of $X$. To prove that $\mathcal{A}(\mathfrak{K}) \subseteq \mathfrak{K}$,   we observe first  that, for $\varphi \in \mathfrak{K}$, 
\begin{eqnarray*}
\mathcal{A}\varphi(t)&\leq&\int_Xd\mu(\tau)\int_\R K(s,\tau)g'(0,\tau)\varphi(t-s)ds=L\varphi(t)\leq L\phi^+(t)= \phi^+(t).
\end{eqnarray*}
Next,  if for some $u$ we have that $0<\phi^-(u)\leq\varphi(u)$, then $u<0$ so that $\varphi(u)\leq\delta e^{\lambda u}\leq\delta$, which implies that $g(\varphi(u),\tau)=g'(0,\tau)\varphi(u)$. If $\phi^-(u)=0$ then $g(\varphi(u),\tau)\geq g'(0,\tau)\phi^-(u)=0$. In either case, 
\begin{eqnarray*}
\mathcal{A}\varphi(t)&\geq&\int_X d\mu(\tau)\int_\R K(s,\tau)g'(0,\tau)\phi^-(t-s)ds = L\phi^-(t)
>\phi^-(t).
\end{eqnarray*}
Now, we claim that $\mathcal{A}\mathfrak{K}$ is a precompact subset of $\mathfrak{K}$. Indeed, the convergence 
in $\mathfrak{K}$ is the uniform convergence on compact subsets of $\R$. On the other hand, the set of functions from $\mathcal{A}\mathfrak{K}$  restricted on every fixed compact interval $[-k,k]$ is obviously uniformly bounded and is also equicontinuous in virtue of the estimation (uniform with respect to $ t \in [-k,k], \phi \in \mathfrak{K}$):
$
|\mathcal{A}\phi(t+h)-\mathcal{A}\phi(t)| \leq$ 
\begin{equation}\label{esc}
\delta e^{\lambda k} \int_X g'(0,\tau)d\mu(\tau)\int_\R|K(s+h,\tau)-K(s,\tau)| 
e^{-\lambda s}ds
\to 0, \ h \to 0.
\end{equation}
Finally, the continuity of $\mathcal{A}$ in  $\mathfrak{K}$  can be easily established by using the dominated convergence theorem and the compactness property of $\mathcal{A}$.
\end{proof} 
Then Lemmas \ref{bs2}, \ref{vg}, \ref{pcc} and the Schauder's fixed point theorem yields 
\begin{theorem}\label{thex} Assume ${\rm \bf(L)}$ and let $\lambda$ be the leftmost  positive zero of  $\chi$.
Then $\mathcal{A}$ has at least one fixed point $\phi$ in $\mathfrak{K}$. If $G'(0)$ is a finite number then  
$|\phi|_{\infty}:= \sup_{s \in R}\phi(s)$  is also finite and  $|\phi|_{\infty} < \zeta_2$.  Moreover, if  the point $\kappa$ is globally attracting with respect to the map $G: (0,\zeta_2] \to (0,\zeta_2] $ then $\phi(+\infty) = \kappa$. 
\end{theorem}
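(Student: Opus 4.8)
The plan is to treat the three assertions separately. For the bare existence, I would simply invoke Lemma~\ref{pcc}: it establishes that $\mathfrak{K}$ is a closed, bounded, convex subset of $X$ and that $\mathcal{A}:\mathfrak{K}\to\mathfrak{K}$ is completely continuous, so Schauder's fixed point theorem immediately produces a fixed point $\phi\in\mathfrak{K}$. Before going further I would record two free consequences of membership in $\mathfrak{K}$: since $\phi^-(t)=\delta e^{\lambda t}(1-e^{\epsilon t})>0$ for $t<0$, the function $\phi$ is not identically zero, so by \textbf{(P)} we get $\phi(t)>0$ for all $t$; and the upper barrier $0\le\phi(t)\le\delta e^{\lambda t}$ forces $\phi(-\infty)=0$. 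These are exactly the two facts the remaining steps need.

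For the bound $|\phi|_\infty<\zeta_2$ when $G'(0)<\infty$, I would first note that the standing hypothesis (\ref{zab}) implies (\ref{sl}); since in addition $\chi(\lambda)=0$ and $G'(0)$ is finite, Lemma~\ref{bs2} applies to the barrier $\phi(t)\le\delta e^{\lambda t}$ and shows $M:=\sup_\R\phi<\infty$. Once $\phi$ is known to be bounded, Lemma~\ref{vg} gives $[m,M]\subseteq G([m,M])$ with $m=\inf_\R\phi=0$, so $M=G(v_0)$ for some $v_0\ge0$. It then suffices to observe that $G(v)<\zeta_2$ for every $v\ge0$: by \textbf{(N2)} one has $Cg(v,\tau_0)\le C\max_{v\ge0}g(v,\tau_0)<\Theta(\zeta_2)$, and applying the increasing inverse $\Theta^{-1}$ yields $G(v)=\Theta^{-1}(Cg(v,\tau_0))<\zeta_2$. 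Hence $M=G(v_0)<\zeta_2$.

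The convergence $\phi(+\infty)=\kappa$ is the delicate part, and here is how I would organize it. Put $m'=\liminf_{t\to+\infty}\phi(t)$ and $M'=\limsup_{t\to+\infty}\phi(t)$. Since $\inf_\R\phi=0<\zeta_1$, Theorem~\ref{thm:per_unif} applies; as $\phi(-\infty)=0$, the vanishing end must be $\omega=-\infty$, and combined with Corollary~\ref{pu} this gives the uniform lower bound $m'>\zeta_1>0$, while $M'\le\zeta_2$. Remark~\ref{rs} (with $\omega=+\infty$) supplies the crucial inclusion $[m',M']\subseteq G([m',M'])$. I would then enlarge $[m',M']$ to a fully invariant interval: because $G([\zeta_1,\zeta_2])\subseteq[\zeta_1,\zeta_2]$ by Lemma~\ref{ran} and $[m',M']\subseteq[\zeta_1,\zeta_2]$, the images $I_n:=G^n([m',M'])$ form an increasing nested family inside $[\zeta_1,\zeta_2]$ (the inclusion $I_0\subseteq I_1$ propagates under $G$), whose closure $J=[a,b]$ satisfies $G(J)=J$. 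Finally I would use the hypothesis that $\kappa$ is globally attracting for $G:(0,\zeta_2]\to(0,\zeta_2]$ to conclude that a compact, fully $G$-invariant interval $J\subseteq[\zeta_1,\zeta_2]$ on which every forward orbit tends to the unique fixed point $\kappa$ must collapse to $\{\kappa\}$, whence $m'=M'=\kappa$.

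The main obstacle is precisely this last implication, $G(J)=J\Rightarrow J=\{\kappa\}$, since $G$ is only unimodal (not monotone), so one cannot read off the endpoints $a,b$ as fixed points directly. The way I expect to close it is to upgrade the pointwise global attractivity to \emph{uniform} convergence $G^n\to\kappa$ on the compact invariant interval $[\zeta_1,\zeta_2]$, and then, using $G(J)=J$ to pick a backward orbit $b=G(u_1),\ u_1=G(u_2),\dots$ lying in $J$, force $b=\lim_n G^n(u_n)=\kappa$ and symmetrically $a=\kappa$. Deriving this uniform convergence from the mere assumption of pointwise global attractivity of a continuous, non-monotone interval map is the step that will require the most care.
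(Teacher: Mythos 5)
Your proposal follows essentially the same route as the paper: Lemma~\ref{pcc} plus Schauder for existence, Lemma~\ref{bs2} together with the bound $0<G(v)<\zeta_2$ for the estimate $|\phi|_{\infty}<\zeta_2$, and Remark~\ref{rs} combined with the global attractivity of $\kappa$ for $\phi(+\infty)=\kappa$ (the paper dispatches this last step in one line, citing \cite{GBT}). The only point you leave open --- upgrading pointwise to uniform attraction on the invariant interval --- is a standard fact for continuous interval maps (a globally attracting fixed point is automatically stable, equivalently $G$ has no period-two orbit, by Coppel's theorem), so your argument closes; alternatively, one can show directly that a nondegenerate compact interval $J$ with $G(J)=J$ forces a genuine period-two orbit, contradicting global attraction.
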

It should be noted that the last statement of this theorem is a straightforward  consequence of Remark  \ref{rs} 
(see also \cite{GBT} where various conditions assuring  the global stability property of $G$ are given).

2.   Next  we show how to reduce the general situation  to the case studied in the first part of this section. Consider the sequence of measurable functions 
$$\gamma_n(s,\tau):=\left\{\begin{array}{ll}g'(0,\tau)s,& \textrm{ for } s\in[0,1/n],\\
 \max\{g'(0,\tau)/n, g(s,\tau)\},& \textrm{ when } s \geq 1/n, \end{array} \right.$$
all of them continuous in $s$ for each fixed $\tau$ and satisfying hypothesis ${\rm \bf(L)}$ with $\delta = 1/n$.  
Note that $\gamma_n(s,\tau)$ converges uniformly to $g(s,\tau)$ on $\R_+$ for every fixed $\tau$.  
Next, set $X':=X\setminus\{\tau_0\}$ and consider continuous increasing functions
$$\tilde{g}_n(v):=\int_{X'}\gamma_n(v,\tau)d\mu(\tau)\int_{\R}K(s,\tau)ds, \quad n=1,2,3 \dots$$ 
Since $\gamma_{n+1}(s,\tau)\leq \gamma_n(s,\tau), \ n =1,2,3 \dots$, the sequence $\{\tilde{g}_n\}$ is monotone. 
Now, for each fixed $v\geq 0$, we have that  $
\lim_{n \to +\infty} \tilde{g}_n(v) = \tilde{g}(v)$ where $\tilde{g}$ was defined in $N2$.  Observe that  $\tilde{g}$ is also continuous and therefore, by Dini's monotone convergence theorem,   $\tilde{g}_n$ converges  to $\tilde{g}$ uniformly on compacts.   
\begin{lemma} Let $G'(0)>1 $ be a finite number. Then 
$\Theta_n(v) := v - \tilde{g}_n(v)$ is strictly increasing in $v$.  Furthermore,
$G_n(v): = \Theta_n^{-1}(C\gamma_n(v,\tau_0))$ converges to $G(v)$ uniformly on $[0,\zeta_2]$ and 
$G'_n(0)= G'(0) >1$.  Finally, equation $G_n(c)=c$ does not have solutions on $(0,\zeta_1]$.
\end{lemma}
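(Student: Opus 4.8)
The plan is to exploit two structural facts about the approximants $\gamma_n$ that follow from the standing sublinearity hypothesis (\ref{zab}) and the $\max$-definition. First, $\gamma_n(v,\tau)\ge g(v,\tau)$ for every $v\ge 0$ and every $\tau$: on $[0,1/n]$ this is exactly (\ref{zab}), and for $v\ge 1/n$ it is immediate from the $\max$. Second, $\gamma_n(v,\tau)\le g'(0,\tau)v$ for all $v\ge 0$: again (\ref{zab}) on $[0,1/n]$, and $g'(0,\tau)/n\le g'(0,\tau)v$ together with $g(v,\tau)\le g'(0,\tau)v$ for $v\ge 1/n$. The first fact yields $\tilde g_n\ge\tilde g$, hence $\Theta_n\le\Theta$ and therefore $\Theta_n^{-1}\ge\Theta^{-1}$ on the common range; these monotonicity relations will drive the last assertion. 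I will also use repeatedly that on the segment $[0,1/n]$ every $\gamma_n(\cdot,\tau)$, including $\tau=\tau_0$, coincides with the linear map $g'(0,\tau)v$, so that there $\tilde g_n(v)=\tilde g'(0)v$ and $\Theta_n(v)=(1-\tilde g'(0))v$, where $\tilde g'(0)=\int_{X'}g'(0,\tau)d\mu(\tau)\int_\R K(s,\tau)ds<1$ because $G'(0)$ is finite.

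For strict monotonicity of $\Theta_n$ I would first treat $0\le v_1<v_2\le 1/n$, where $\Theta_n(v)=(1-\tilde g'(0))v$ is plainly strictly increasing. For $1/n\le v_1<v_2$ the key estimate is the pointwise increment bound $\gamma_n(v_2,\tau)-\gamma_n(v_1,\tau)\le g(v_2,\tau)-g(v_1,\tau)$, which I would verify by a short case analysis according to the positions of $g(v_1,\tau)$ and $g(v_2,\tau)$ relative to the constant $g'(0,\tau)/n$ (the $\max$ can only absorb increments). Integrating over $X'$ gives $\tilde g_n(v_2)-\tilde g_n(v_1)\le\tilde g(v_2)-\tilde g(v_1)$, whence $\Theta_n(v_2)-\Theta_n(v_1)\ge\Theta(v_2)-\Theta(v_1)>0$ on $[0,\zeta_2]$ by N2; the mixed case $v_1<1/n<v_2$ is handled by splitting at $1/n$. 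Thus $\Theta_n^{-1}$ is defined on $[0,\Theta_n(\zeta_2)]$, and for all $n$ large $C\sup_v\gamma_n(v,\tau_0)=C\max_{v\ge0}g(v,\tau_0)<\Theta(\zeta_2)$ (by N2) stays below $\Theta_n(\zeta_2)\to\Theta(\zeta_2)$, so $G_n$ is well defined with $G_n([0,\zeta_2])\subseteq[0,\zeta_2)$.

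The derivative at $0$ is then immediate: on the right-neighborhood of $0$ where $C\gamma_n(v,\tau_0)=Cg'(0,\tau_0)v$ stays inside the linear range of $\Theta_n^{-1}$, one has $G_n(v)=\frac{Cg'(0,\tau_0)}{1-\tilde g'(0)}\,v$, so $G_n'(0)=Cg'(0,\tau_0)/(1-\tilde g'(0))=G'(0)>1$ by the formula from the proof of Lemma \ref{ran}. For uniform convergence on $[0,\zeta_2]$ I would combine the two convergences already recorded — $\gamma_n(\cdot,\tau_0)\to g(\cdot,\tau_0)$ uniformly and $\tilde g_n\to\tilde g$ uniformly on compacts (Dini) — to obtain $\Theta_n\to\Theta$ uniformly, and then pass to inverses: since $\Theta$ is a homeomorphism of $[0,\zeta_2]$ onto its image, uniform convergence of the strictly increasing maps $\Theta_n$ forces $\Theta_n^{-1}\to\Theta^{-1}$ uniformly, and composing with the uniformly convergent inner functions $C\gamma_n(\cdot,\tau_0)$ (using uniform continuity of $\Theta^{-1}$) yields $G_n\to G$ uniformly. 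Finally, absence of fixed points on $(0,\zeta_1]$ follows by chaining the monotonicity relations: $\gamma_n(v,\tau_0)\ge g(v,\tau_0)$ and $\Theta_n^{-1}\ge\Theta^{-1}$ give $G_n(v)=\Theta_n^{-1}(C\gamma_n(v,\tau_0))\ge\Theta_n^{-1}(Cg(v,\tau_0))\ge\Theta^{-1}(Cg(v,\tau_0))=G(v)$, and $G(v)>v$ on $(0,\zeta_1]$ by part 3 of Lemma \ref{ran} forces $G_n(v)>v$ there.

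I expect the only genuinely delicate point to be the passage to the inverses: the uniform convergence $\Theta_n^{-1}\to\Theta^{-1}$ together with the bookkeeping that the arguments $C\gamma_n(v,\tau_0)$ remain inside the slightly $n$-dependent range $[0,\Theta_n(\zeta_2)]$, so that the compositions are legitimate for all large $n$. The increment inequality $\gamma_n(v_2,\tau)-\gamma_n(v_1,\tau)\le g(v_2,\tau)-g(v_1,\tau)$ is the one computation needing a case check, but it is elementary; everything else reduces to the monotonicity bookkeeping $\gamma_n\ge g\Rightarrow\Theta_n\le\Theta\Rightarrow\Theta_n^{-1}\ge\Theta^{-1}$ and to the exact linearity of $\gamma_n$ on $[0,1/n]$.
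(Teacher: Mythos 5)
Your proposal is correct and follows essentially the same route as the paper's proof: exact linearity of $\gamma_n$ on $[0,1/n]$ plus the increment bound $\tilde g_n(v_2)-\tilde g_n(v_1)\le \tilde g(v_2)-\tilde g(v_1)$ (which the paper obtains via the decomposition into the sets $A_j,B_j$ and you obtain pointwise in $\tau$ from the monotonicity of $g(\cdot,\tau)$ in {\bf (N)}), then uniform convergence of $\Theta_n$ and $\gamma_n(\cdot,\tau_0)$, and finally the comparison $\gamma_n\ge g$ to rule out fixed points on $(0,\zeta_1]$. The paper phrases this last step through the identity $c=\int_X\gamma_n(c,\tau)w(\tau)\,d\mu(\tau)\ge\tilde g(c)+Cg(c,\tau_0)$, which is equivalent to your chain $G_n\ge G>\mathrm{id}$ on $(0,\zeta_1]$.
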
 
\begin{proof} Set $w(\tau): = \int_{\R}K(s,\tau)ds$. Since $G'(0)$ is finite, we have that 
$$
\tilde g'(0) = \int_{X'}g'(0,\tau)w(\tau)d\mu(\tau) <1. 
$$
Now, if $v \in [0,1/n]$ then $\tilde{g}_n(v) =  \tilde g'(0)v$ and therefore 
$
\tilde{g}_n(v_2)- \tilde{g}_n(v_1) = \tilde{g}'(0)(v_2- v_1) < v_2- v_1
$ for $0\leq v_1 < v_2 \leq 1/n$. 

Next, for $1/n \leq v_1 < v_2$ we consider the following measurable subsets of $X'$: 
$$
A_j:= \left\{\tau \in X': g(v_j,\tau) \leq \frac{g'(0,\tau)}{n}\right\}, \quad
B_j:= \left\{\tau \in X': g(v_j,\tau) > \frac{g'(0,\tau)}{n}\right\}. 
$$
Clearly,  $B_j= X' \setminus A_j, \ A_2 \subset A_1,  B_1 \subset B_2$ and $B_2\setminus B_1 = 
A_1\setminus A_2$.  We have 
$$
\tilde{g}_n(v_2)- \tilde{g}_n(v_1) = \int_{B_2\setminus B_1}(g(v_2,\tau) - \frac{g'(0,\tau)}{n})w(\tau)d\mu(\tau) +
$$
$$
\int_{B_1}(g(v_2,\tau)-g(v_1,\tau))w(\tau)d\mu(\tau) \leq \int_{B_2}(g(v_2,\tau)-g(v_1,\tau))w(\tau)d\mu(\tau) \leq 
$$
$$
\int_{X'}(g(v_2,\tau)-g(v_1,\tau))w(\tau)d\mu(\tau) = \tilde g(v_2) - \tilde g(v_1) < v_2 -v_1.  
$$
Finally, consider $v_1 < 1/n<v_2$. Then 
$$
\tilde{g}_n(v_2)- \tilde{g}_n(v_1) = \tilde{g}_n(v_2)- \tilde{g}_n(1/n) + \tilde{g}_n(1/n)- \tilde{g}_n(v_1) <   
v_2-1/n + 1/n -v_1 = v_2-v_1.   
$$
This proves that $\Theta_n$ are strictly increasing. Moreover, since clearly $\Theta_n(\zeta_2) > \max_{s \geq 0} C\gamma_n(v,\tau_0)$ for all large $n$, the functions $G_n$ are well defined. The second conclusion of the lemma follows now immediately from the uniform convergence properties of the sequences $\{\gamma_n(v,\tau_0)\}, \ \{\tilde g_n(v)\}$. Note also that $G_n(v)= G'(0)v$ in some small neighborhood $U_n$ of $v=0$. 
Finally, to prove the last conclusion of the lemma,  we observe that  $G_n(c)=c$ implies  
\begin{equation*}
c=\int_X \gamma_n(c,\tau)w(\tau)d\mu(\tau)  \geq \int_X g(c,\tau)w(\tau)d\mu(\tau) = \tilde g(c) + g(c,\tau_0)w(\tau_0).  
\end{equation*}
In this way, $\Theta(c) \geq  g(c,\tau_0)w(\tau_0)$ so that $c \geq G(c)$. Since $G(s) >s$ on $[0,\zeta_1]$ (see Lemma \ref{ran}.3), we conclude that also $G_n(s) >s$ for $s \in [0,\zeta_1]$. 
\end{proof}
\begin{corollary} \label{ish} For all sufficiently large $n$, and with the same $\zeta_1$ and $\zeta_2$ as in 
Lemma \ref{ran}, each $G_n$ possesses all three properties listed in Lemma \ref{ran}. 
\end{corollary}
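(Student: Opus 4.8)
The plan is to verify the three properties of Lemma \ref{ran} for each $G_n$ with $n$ large, harvesting everything already established in the preceding lemma and reducing the only genuinely new points to the shape of the truncated nonlinearity on the fixed interval $[\zeta_1,\zeta_2]$.

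First I would dispose of the easy items. Continuity of $G_n$ on $\R_+$ is immediate from the strict monotonicity (hence continuous invertibility) of $\Theta_n$ together with the continuity of $\gamma_n(\cdot,\tau_0)$; positivity of $G_n$ for $s>0$ follows because $\gamma_n(s,\tau_0)>0$ for $s>0$ while $\Theta_n(0)=0$ and $\Theta_n^{-1}$ is increasing, so $G_n(s)=\Theta_n^{-1}(C\gamma_n(s,\tau_0))>0$. The inequality $G_n'(0+)=G'(0)>1$ and the inclusion $G_n(\R_+)\subseteq[0,\zeta_2]$ (valid for all large $n$ thanks to $\Theta_n(\zeta_2)>\max_{s\geq0}C\gamma_n(s,\tau_0)$) are both contained in the preceding lemma, as is the statement that $G_n(c)=c$ has no root in $(0,\zeta_1]$; since $G_n(s)=G'(0)s>s$ in a right neighbourhood of $0$, continuity and the absence of fixed points force $G_n(s)>s$ on all of $(0,\zeta_1]$. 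This already gives property 1 in full and the second half of property 3.

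The core of the argument is the minimum condition $\min_{s\in[\zeta_1,\zeta_2]}G_n(s)=G_n(\zeta_1)$ together with the lower bound needed for property 2. Here I would exploit that on the \emph{fixed} interval $[\zeta_1,\zeta_2]$ the truncation is inactive for large $n$. Indeed, property 3 of Lemma \ref{ran}, read through the increasing map $\Theta^{-1}$ in $G=\Theta^{-1}(Cg(\cdot,\tau_0))$, is equivalent to $g(\zeta_1,\tau_0)=\min_{s\in[\zeta_1,\zeta_2]}g(s,\tau_0)$, and this minimum is a positive constant. Hence, as soon as $n$ is large enough that $g'(0,\tau_0)/n<g(\zeta_1,\tau_0)$ (and $1/n<\zeta_1$), we have $g(s,\tau_0)\geq g(\zeta_1,\tau_0)>g'(0,\tau_0)/n$ for every $s\in[\zeta_1,\zeta_2]$, so that $\gamma_n(s,\tau_0)=\max\{g'(0,\tau_0)/n,g(s,\tau_0)\}=g(s,\tau_0)$ on this interval. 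Consequently $G_n(s)=\Theta_n^{-1}(Cg(s,\tau_0))$ there, and since $\Theta_n^{-1}$ is increasing the location of the minimum of $G_n$ on $[\zeta_1,\zeta_2]$ coincides with that of $g(\cdot,\tau_0)$, namely $\zeta_1$. This proves the first half of property 3.

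Finally, property 2 is assembled from the pieces: the upper bound $G_n(s)\leq\zeta_2$ is the already-noted inclusion $G_n(\R_+)\subseteq[0,\zeta_2]$, while for $s\in[\zeta_1,\zeta_2]$ the minimum identity just proved and the fixed-point inequality give $G_n(s)\geq G_n(\zeta_1)>\zeta_1$, whence $G_n([\zeta_1,\zeta_2])\subseteq[\zeta_1,\zeta_2]$. I expect the only delicate point to be the min-location step: one must be sure that restricting to the single fixed interval $[\zeta_1,\zeta_2]$ (rather than invoking the mere uniform convergence $G_n\to G$, which by itself does not pin down where the minimum of $G_n$ is attained) is what legitimately transfers property 3 from $G$ to $G_n$. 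The truncation-inactivity observation $\gamma_n(\cdot,\tau_0)\equiv g(\cdot,\tau_0)$ on $[\zeta_1,\zeta_2]$ is precisely the device that makes this rigorous, and note that it does \emph{not} assert $G_n\equiv G$ on that interval, since $\Theta_n$ still differs from $\Theta$.
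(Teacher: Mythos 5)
Your proof is correct. The paper states Corollary \ref{ish} without any proof, treating it as immediate from the preceding (unnumbered) lemma, whose listed conclusions are: $\Theta_n$ strictly increasing, $G_n\to G$ uniformly on $[0,\zeta_2]$, $G_n'(0)=G'(0)>1$, and the absence of fixed points of $G_n$ in $(0,\zeta_1]$. For property 1, the second half of property 3, and the upper bound in property 2, your argument is exactly that intended deduction (continuity/positivity via $\Theta_n^{-1}$, the sign argument combining $G_n(s)=G'(0)s>s$ near $0$ with the no-fixed-point conclusion, and $\Theta_n(\zeta_2)>\max_{s\geq 0}C\gamma_n(s,\tau_0)$ for the range). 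Where you genuinely add something is the minimum-location claim $\min_{s\in[\zeta_1,\zeta_2]}G_n(s)=G_n(\zeta_1)$: as you correctly note, uniform convergence only yields $\min_{[\zeta_1,\zeta_2]}G_n\geq G_n(\zeta_1)-2\varepsilon$, which does not give the stated equality (it would give the weaker bound $\min_{[\zeta_1,\zeta_2]}G_n>\zeta_1$, which is in fact all that the downstream arguments use, but not property 3 as literally written). Your device --- for large $n$ one has $g(s,\tau_0)\geq g(\zeta_1,\tau_0)>g'(0,\tau_0)/n$ on $[\zeta_1,\zeta_2]$, hence $\gamma_n(\cdot,\tau_0)\equiv g(\cdot,\tau_0)$ there, and the strictly increasing $\Theta_n^{-1}$ then places the minimizer of $G_n$ at the minimizer of $g(\cdot,\tau_0)$, which property 3 for $G$ identifies as $\zeta_1$ --- is the right way to obtain the equality, and you are also right that this does not require (and does not assert) $G_n\equiv G$ on that interval. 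The extra ingredients you invoke beyond the lemma, namely $g(\zeta_1,\tau_0)>0$ from hypothesis \textbf{(N)} and the continuity of $g(\cdot,\tau_0)$, are available, so the argument is complete and, on this one point, more careful than what the paper's implicit proof supplies.
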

Hence, for each large $n$,  Corollary \ref{ish}, Theorems \ref{thex} and \ref{thm:per_unif} guarantee the existence of a positive continuous  function $\varphi_n(t)$ such that $\varphi_n(-\infty)=0$, $\liminf_{t\rightarrow +\infty}\varphi_n(t)\geq \zeta_1$, $\varphi_n(t)\leq \zeta_2$, $t\in\R,$ and
$$\varphi_n(t)=\int_X d\mu(\tau)\int_\R K(s,\tau)\gamma_n(\varphi_n(t-s),\tau)ds.$$
Since the shifted functions $\varphi_n(s+a)$ satisfy the same integral equation, we can assume that $\varphi_n(0)=0.5 \zeta_1$. Furthermore, similarly to (\ref{esc}) we can show that the sequence $\{\varphi_{n}\}$ is 
equicontinuous on $\R$.  Consequently there exists  a subsequence $\{\varphi_{n_j}\}$ which converges uniformly on compacts to some bounded element $\phi\in C(\R,\R)$. By the Lebesgue's dominated convergence theorem,  $\phi$ satisfies  equation (\ref{17}). Finally, notice that $\phi(0)=0.5 \zeta_1$ and thus $\phi(-\infty)=0$ and $\liminf_{t\rightarrow+\infty}\phi(t)> \zeta_1$ (by Theorem \ref{thm:per_unif}). This finalizes the proof of Theorem \ref{thex} when $\chi(z)$ has a positive zero. Its statement for $\chi(z)$ having  a negative zero 
is  immediate after  the change of variables $\psi(t) = \phi(-t)$.  
\hfill $\square$
\section{Applications} \label{apn}
\subsection{Co-existence of expansion and extinction waves in evolution equations with asymmetric non-local response \cite{AGT,fhw,gouss,ma1,tz,TAT}.} \hfill 

Here we  complement   studies \cite{AGT,tz,TAT} concerning  positive bounded wavefronts $u(x,t)= \phi(x+ct)$ for the non-local delayed
reaction-diffusion equation
\begin{equation}\label{17a} \hspace{0mm}
u_t(t,x) = u_{xx}(t,x)  - f(u(t,x)) + \int_{\R}K(x-y)g(u(t-h,y))dy, \ u
\geq 0, 
\end{equation}
where  

({$\mathcal F$}) {\it  locally Lipschitzian function $f: \R_+ \to \R_+, f'(0)>  f(0)=g(0)=0,$ is   strictly increasing and $f(+\infty) > \sup_{s\geq 0} g(s)$. In addition,  $f'(0) < g'(0) < +\infty$ and  $g(t)>0, $ $ t >0$. Kernel $K\geq 0$ is normalized by  $\int_\R K(s)ds=1$.}

We admit  spatial asymmetry  of equation (\ref{17a}) by considering non-even 
kernels.  Due to this circumstance,  the concept
of  wavefront needs some clarification. Indeed, in the symmetric case, the following 
two equivalent definitions have been commonly used: 1)
{\it wavefront $u(x,t) =
\phi(x-ct)$ is a positive classical solution  of (\ref{17a})
satisfying $\phi(-\infty) = \kappa,$ $\phi(+\infty) = 0$}, e.g. see \cite{BNPR,gouS}; 2)   {\it
wavefront $u(x,t) = \psi(x+ct)$ is a positive classical
solution   satisfying $\psi(-\infty) = 0,$
$\psi(+\infty) = \kappa$}, e.g. see \cite{fhw,tz}.   If $K(s)\equiv K(-s)$,  both
definitions  define the same object since   wavefront $\phi(x-ct)$ generates
wavefront $\psi(x+ct):= \phi(-(x+ct))$. Moreover, the propagation speed $c$ should be positive in each
of the above definitions if  $K$ is an even function. Therefore, from the biological point of view the both type of
wavefronts can be interpreted as {\it  the expansion fronts}:
they converge to the positive equilibrium at each fixed position $x$
as $t \to +\infty$. 

Taking into account the above discussion, we will  use  more general definition adapted to
the possible asymmetry $K$:
{\it Bounded positive classical solution $u(x,t) =
\phi(x+ct)$  of equation (\ref{17a}) is a semi-wavefront if either
$\phi(-\infty) = 0$ or $\phi(+\infty) = 0$.}
The prefix {\it semi} means here that, contrary to the wavefronts,  the
convergence of  $\phi(t)$ at the complementary end of $\R$ is not
mandatory.  It is clear that $u(x,t) =
\phi(x+ct)$  is a semi-wavefront if and only if $\phi(t)$ is a positive bounded $C^2-$solution
of the integro-differential equation
\begin{equation} \label{twe}
y''(t)-cy'(t)-f(y(t)) + \int_{\R}K(s)g(y(t-s-ch))ds =0,
\end{equation}
which vanishes either at $-\infty$ or at $+\infty$.  By abusing the notation, we 
still call such a solution $y=\phi(t)$ a semi-wavefront. 
Equation (\ref{twe}) can be written as 
$$
 y''(t) - cy'(t)-\beta y(t)+f_\beta(y(t))+  \int_{\R}k_h(w)g(y(t-
w))dw=0,\, t \in \R,
$$
where  $k_h(w) = K(w-ch)$ and $f_\beta(s)=\beta s- f(s)$ for some  $\beta> 0$.  
Then  the wave profile $\phi$ solves the equation
$$
\phi(t)=\frac{1}{\sigma(c)}\left(
\int_{-\infty}^te^{\nu(t-s)}(\mathcal G \phi)(s)ds
+\int_t^{+\infty}e^{\mu(t-s)}(\mathcal G\phi)(s)ds\right), 
$$
where  $\sigma(c)=\sqrt{c^2+4\beta}$,  $\nu<0<\mu$ are the roots of  $z^2-cz-\beta=0$
 and  $(\mathcal G\phi)(t):=\displaystyle \int_{\R}k_h(s)g(\phi(t-
s))ds+f_\beta(\phi(t))$, e.g. see  \cite{AGT}.  In other words, 
$$ 
\phi(t)=(\mathcal K*k_h)*g(\phi)(t)+\mathcal K* f_\beta(\phi)(t),$$
where $
 \mathcal K(s)= e^{\nu s}/\sigma(c)$ for $s\geq 0$, $
 \mathcal K(s)= e^{\mu s}/\sigma(c)$ for $s\leq 0$, and consequently  $\int_\R  \mathcal K(s)ds =1/\beta$. 
We may invoke now Theorems \ref{thm:per_unif},  \ref{thm:existencia}  where $X=\{\tau_0,\tau_1\}$ and 
$$K(s,\tau)=\left\{\begin{array}{cc} (\mathcal K*k_h)(s),& \tau=\tau_0, \\  \mathcal K(s),& \tau=\tau_1, \end{array}\right.\quad  g(s,\tau)=\left\{\begin{array}{cc}  g(s), & \tau=\tau_0, \\ f_\beta(s),& \tau=\tau_1.\end{array}\right.$$ 
Observe that  the functions $g(u,\tau_j), K(s,\tau_j)$  meet ({\bf N}).  Indeed,   there exists  $\zeta_2$ such that  $f(\zeta_2)>\sup_{s \geq 0} g(s)$. Then we can take $\beta$ large enough to have the function $\tilde{g}(s) = f_\beta(s)/\beta = s - f(s)/\beta$  increasing on $[0, \zeta_2]$.  Next, 
$\Theta(v):=v-\tilde{g}(v) = f(v)/\beta$ is strictly  increasing by ({$\mathcal F$}) and $G(s) = f^{-1}(g(s))$ is well defined. Finally, 
$$
 \chi(z) =  \frac{-\chi_1(z,c)}{\beta+cz-z^2}, \ {\rm where \ }
 \chi_1(z,c) = z^2-cz- f'(0)+g'(0)e^{-zch}\int_ {\R}
K(s)e^{-zs}ds.
$$
Analyzing  the mutual position of real zeros of  $\chi_1(z,c)$ and their dependence on the parameter $c$, we establish in Appendix the existence of
two real extended numbers $c_*^-< c_*^+$ called {\it the critical speeds} such that, for every $c \in
(-\infty,c_*^-]\cup[c_*^+, +\infty)$, equation $\chi_1(\lambda,c) =0$
either (i) has exactly two real roots $\lambda_1(c)\leq \lambda_2(c)$ or
(ii) has exactly one real root $\lambda_1(c)$. Furthermore, each
$\lambda_j(c)$ is positive if $c\geq c_*^+$ and is negative if
$c\leq  c_*^-$. If $c \in (c_*^-, c_*^+)$, then $\chi_1(z,c)> 0$ for all
admissible $z$.  The critical speed $c_*^+$ [ $c_*^-$]  is finite if and only if $\chi_1(\lambda,c)$ is finite for some  $\lambda >0$ [respectively, with some $\lambda <0$].  If the  integral in $\chi_1$  diverges for all $z >0$ [for all $z <0$], we set $c_*^+= +\infty$ [respectively, $c_*^-= -\infty$].
\begin{remark} The above definition of $c_*^\pm$ generalizes the concept of  critical speeds $c_*, c_\# \geq 0$ from \cite{TAT}.  In particular, $c_*=c_*^+, \ c_\# =c_*^-$ if
$c_*^- \geq 0$ and $c_\# =0, \  c_*=\max\{0, c^+_*\}$ if $c_*^-<0$. Thus Theorem \ref{mainAA} below gives a global (i.e. including all $c \in \R$) perspective on the existence/persistence results in \cite{TAT}.    
\end{remark}
Applied to equation (\ref{twe}), Theorem \ref{thm:existencia} yields the following extension of  \cite[Theorem 4.2b]{tz}, \cite[Theorem 1.1]{ma1} and \cite[Theorem 4]{TAT}:
\begin{theorem} \label{mainAA} Assume ({$\mathcal F$}) and   $
g(s) \leq g'(0)s, $ $ f(s) \geq f'(0)s $ for all $s \geq 0.
$
Then equation (\ref{twe}) has at least one semi-wavefront $u= \phi_c(x
+ct)\leq \zeta_2$ for each  $c \in (-\infty, c_*^-]\cup [c_*^+, +\infty)$.
Moreover, if $c \leq c_*^-$ then  $\phi_c(+\infty) =0$ and
$\liminf_{s \to -\infty} \phi_c(s) >\zeta_1$. Similarly, if $c \geq c_*^+$
then  $\phi_c(-\infty) =0$ and $\liminf_{s \to +\infty}
\phi_c(s) > \zeta_1$. Next, if  equation $f(s)=g(s)$ has only two solutions: $0, \kappa$,  with $\kappa$ being globally attracting with respect to the map $f^{-1}\circ g : (0,\zeta_2] \to (0,\zeta_2] $,  then each of these semi-wavefronts  is in fact  a wavefront. 
\end{theorem}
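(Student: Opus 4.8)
The plan is to recognize equation (\ref{twe}), rewritten as the convolution equation (\ref{17}) over the two-point space $X=\{\tau_0,\tau_1\}$ with the kernels $K(s,\tau_j)$ and nonlinearities $g(s,\tau_j)$ displayed above, and to check that it meets all hypotheses of Theorems \ref{thm:existencia} and \ref{thm:per_unif}; the asymptotics then follow verbatim. Most of the reduction is already in place, so the first step is a bookkeeping verification of the structural conditions. Condition (N) was arranged in the discussion preceding the theorem (large $\beta$ makes $\tilde g(s)=s-f(s)/\beta$ increasing on $[0,\zeta_2]$ and $\Theta(v)=f(v)/\beta$ strictly increasing). Condition (C) holds with $C_\delta(\tau)\equiv g'(0,\tau)$, the two integrals $\int_\R K(s,\tau_j)\,ds=1/\beta$ being finite, and the two sublinear bounds assumed in the theorem, $g(s)\le g'(0)s$ and $f(s)\ge f'(0)s$, translate exactly into $g(s,\tau_j)\le g'(0,\tau_j)s$, i.e. into (\ref{zab}). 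Condition (P) follows from Lemma \ref{lem:nucleo} with $\widetilde X=\{\tau_0\}$: $g(\cdot,\tau_0)=g$ vanishes only at $0$ and $K(s,\tau_0)=(\mathcal K * k_h)(s)>0$ for every $s$, being a convolution of the everywhere-positive $\mathcal K$ with the nonnegative, nonzero $k_h$. Finally $\chi(0)=-\chi_1(0,c)/\beta=-(g'(0)-f'(0))/\beta<0$ because $f'(0)<g'(0)$, and $G=f^{-1}\circ g$ has finite derivative $G'(0)=g'(0)/f'(0)>1$.

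The second step is to translate the sign-change hypothesis of Theorem \ref{thm:existencia} into the root structure of $\chi_1(\cdot,c)$ established in the Appendix, using $\chi(z)=-\chi_1(z,c)/(\beta+cz-z^2)$ together with the fact that the denominator is positive on the interval between the roots $\nu<0<\mu$ of $z^2-cz-\beta$. For $c>c_*^+$ the function $\chi_1(\cdot,c)$ has a leftmost simple positive zero, so $\chi$ is negative on a right neighborhood of $0$ and becomes positive just beyond that zero; thus $\chi$ genuinely changes sign on some $(0,\bar\omega)$, and Theorem \ref{thm:existencia} produces a semi-wavefront with $\phi_c(-\infty)=0$, $\sup\phi_c\le\zeta_2$ and $\liminf_{t\to+\infty}\phi_c(t)>\zeta_1$. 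The case $c<c_*^-$ is symmetric: there $\chi$ changes sign on some $(-\bar\omega,0)$ and the respectively-branch of Theorem \ref{thm:existencia} gives $\phi_c(+\infty)=0$ and $\liminf_{t\to-\infty}\phi_c(t)>\zeta_1$.

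The main obstacle is the pair of critical speeds $c=c_*^\pm$ themselves, precisely the endpoints the theorem insists on including. There $\chi_1(\cdot,c)$ has a double (tangential) positive [negative] zero, so $\chi$ only touches zero from below and does \emph{not} change sign; Theorem \ref{thm:existencia} thus fails to apply directly. I would resolve this by a limiting argument over supercritical speeds: choose $c_n\downarrow c_*^+$, take the wavefronts $\phi_{c_n}$ just produced, and normalize them by a translation so that $\phi_{c_n}(0)=\tfrac12\zeta_1$. An equicontinuity estimate analogous to (\ref{esc}) (uniform in $n$, the kernels depending continuously on $c$), together with the uniform bound $\phi_{c_n}\le\zeta_2$, lets me extract a subsequence converging locally uniformly to a bounded $\phi$ solving (\ref{twe}) at $c=c_*^+$. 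The delicate point is that the normalization must survive the limit, i.e. the translates must not drift to $\pm\infty$; this is exactly what $\phi(0)=\tfrac12\zeta_1<\zeta_1$ secures, since then the dichotomy of Theorem \ref{thm:per_unif} forces $\inf\phi=0$, $\phi(-\infty)=0$ and $\liminf_{t\to+\infty}\phi(t)>\zeta_1$, so the limit is a genuine semi-wavefront and collapses neither to $0$ nor to a positive constant. The symmetric construction covers $c=c_*^-$.

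For the final wavefront conclusion I would use that $G=f^{-1}\circ g$, so that $G(s)=s$ is equivalent to $f(s)=g(s)$ and the fixed points of $G$ on $\R_+$ are precisely $\{0,\kappa\}$. The hypothesis that $\kappa$ is globally attracting for $f^{-1}\circ g$ on $(0,\zeta_2]$ is then literally the attractivity hypothesis in the last sentence of Theorem \ref{thm:existencia}; equivalently, Remark \ref{rs} yields $[m',M']\subseteq G([m',M'])$ at the nontrivial end, forcing $m'=M'=\kappa$. Hence each semi-wavefront converges to $\kappa$ at the end complementary to the vanishing one, which upgrades it to a full wavefront and completes the proof.
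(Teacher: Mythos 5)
Your proposal is correct and follows essentially the same route as the paper: reduce to the convolution equation (\ref{17}) on the two-point space, verify \textbf{(N)}, \textbf{(C)}, \textbf{(P)} and the sign change of $\chi$ via the root structure of $\chi_1(\cdot,c)$ from the Appendix, apply Theorem \ref{thm:existencia} for $c$ strictly outside $[c_*^-,c_*^+]$, obtain the critical-speed profiles as normalized locally uniform limits of supercritical ones (exactly the argument the paper delegates to Section \ref{SE}.2 and \cite{TAT}), and upgrade to wavefronts via the global attractivity of $\kappa$ for $G=f^{-1}\circ g$ as in Remark \ref{rs}.
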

\begin{proof} For a fixed $c' \in \R \setminus [c_*^-, c_*^+]$, this result follows from Theorem \ref{thm:existencia} since the equation $\chi_1(z,c') =0$ has at least one real root in the interior of the domain of definition of $\chi_1(\cdot,c')$.  Now, if $c' \in \{c_*^-, c_*^+\}$ is finite,  we obtain a semi-wavefront $\phi_{c'}$ as a limit of profiles $\phi_{c_j}$ where either  $c_j\uparrow c_*^- $ or  $c_j\downarrow c_*^+$.  See  Section \ref{SE}.2 above or \cite[Section 6, Case II]{TAT} for more details. 
\end{proof} 
We observe that   each possible mutual position of
$c_*^-\leq c_*^+$ and $0$ is possible.  For instance, if 
$K(s)= e^{-(s+\rho)^2}/\sqrt{4\pi}, \quad h=2, \quad g'(0)=2 > f'(0)=1,$
then $c_*^+=-c_*^-= 0.79$ for $\rho=0$ (symmetric case), while
$c_*^+= 2.7, \ c_*^-=  0.7\dots$ for $\rho=5$ (asymmetric case).
In particular, if $\rho =5$ then equation (\ref{twe}) has at least one {\it stationary}
(i.e. propagating at the velocity
$c=0$) semi-wavefront.
In the case when $c_*^-,c_*^+$ are of the same sign, an interesting (by its possible biological interpretation) phenomenon occurs:
equation (\ref{twe}) can possess the {\it extinction} waves. 
Indeed, if $0<c< c_*^-$ then the wave $u(x,t) = \phi(x+ct)$
converges to $0$ at each position $x$ as $t \to +\infty$.
Analogously, for each $x\in \R$, we have $\lim_{t \to -\infty}
u(x,t)=0$ when the velocity $c$ is such that $c_*^+<c<0$.  As far as we know, this kind of  
extinction waves was for the first time mentioned by K. Schumacher as {\it backward traveling fronts} in \cite[p. 66: Example and Figure 3]{KS}.  See also \cite{CDM,FZ,LZ}.

Finally, under weaker conditions on $g, f$, we get  from Theorem \ref{thm:per} the following 
\begin{theorem} \label{41a22}  Assume ({$\mathcal F$})  and let $u= \phi(x+ct)$ be a positive
bounded solution of equation (\ref{twe}) satisfying $
\liminf\limits_{s \to -\infty} \phi(s)
=0$.  Then $\phi(-\infty)=0$,  the critical speed  $c_*^+$ is  finite and $c \geq c_*^+$.
A similar result holds when $
\liminf\limits_{s \to +\infty} \phi(s)
=0$. 
Hence,  equation (\ref{twe}) does not have neither  pulses nor semi-wavefronts propagating at the velocity $c \in (c_*^-,c_*^+) $.
\end{theorem}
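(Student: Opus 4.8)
The plan is to recognize (\ref{twe}) as the instance of the convolution equation (\ref{17}) already set up just before the theorem, with the two-point space $X=\{\tau_0,\tau_1\}$, kernels $K(\cdot,\tau_0)=\mathcal K*k_h$, $K(\cdot,\tau_1)=\mathcal K$, and nonlinearities $g(\cdot,\tau_0)=g$, $g(\cdot,\tau_1)=f_\beta$, and then to feed the hypothesis $\liminf_{s\to-\infty}\phi(s)=0$ into the dichotomy principle together with Proposition \ref{gg}. First I would verify that the standing assumptions of Theorem \ref{thm:per} hold for this kernel and nonlinearity. Condition \textbf{(C)} follows from the local Lipschitz continuity of $g$ near the origin together with the elementary bound $f_\beta(s)=\beta s-f(s)\le\beta s$ (recall $f$ is increasing, $f(0)=0$), and from $\int_\R\mathcal K(s)ds=1/\beta<\infty$. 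Condition \textbf{(P)} follows from Lemma \ref{lem:nucleo} applied with $\widetilde X=\{\tau_0\}$: indeed $g(\cdot,\tau_0)=g$ vanishes only at $0$ by $(\mathcal F)$, while $K(\cdot,\tau_0)=\mathcal K*k_h$ is strictly positive on all of $\R$ because $\mathcal K>0$ everywhere and $k_h\ge0$ has positive integral. Finally $\chi(0)<0$, since $\chi(0)=-(g'(0)-f'(0))/\beta$ and $f'(0)<g'(0)$ by $(\mathcal F)$.

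With these checks in place, the alternative at $-\infty$ in Theorem \ref{thm:per} forces $\phi(-\infty)=0$: the only competing possibility, $\liminf_{s\to-\infty}\phi>0$, is excluded by hypothesis. Since $\phi$ is positive it never vanishes, so $\phi\not\equiv0$ on every left half-line, and Proposition \ref{gg} applies to give that $\chi$ is well defined and has a zero on some non-degenerate interval $(0,\gamma]$. The well-definedness of $\chi$ near $0+$ means that the integral $\int_\R K(s,\tau)g'(0,\tau)e^{-sz}ds$ converges for small positive $z$; in particular $\int_\R K(s)e^{-zs}ds$ converges there, so $\chi_1(\lambda,c)$ is finite for some $\lambda>0$, which by the Appendix is exactly the condition ensuring that $c_*^+$ is finite.

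Next I would transfer this positive zero of $\chi$ to $\chi_1$. Writing $\chi(z)=-\chi_1(z,c)/(\beta+cz-z^2)$ and recalling that $\beta+cz-z^2=-(z^2-cz-\beta)>0$ precisely on the interval $(\nu,\mu)$ delimited by the roots $\nu<0<\mu$, I observe that the domain of $\chi$ is contained in $(0,\mu)$, so the zero on $(0,\gamma]\subset(0,\mu)$ sits where the denominator is finite and strictly positive; hence it is a genuine positive zero of $\chi_1(\cdot,c)$. The sign analysis of the Appendix then states that $\chi_1(\cdot,c)$ can possess a positive real root only for $c\ge c_*^+$, giving $c\ge c_*^+$. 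The symmetric statement for $\liminf_{s\to+\infty}\phi=0$ is obtained by the substitution $\psi(t)=\phi(-t)$, which turns (\ref{twe}) into an equation with reversed kernel and characteristic function $\chi(-z)$ (cf.\ Step~2 in the proof of Theorem \ref{thm:per}); the same argument then produces a negative zero of $\chi_1(\cdot,c)$, so $c_*^-$ is finite and $c\le c_*^-$.

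Finally, these two cases combine to yield the last assertion. A semi-wavefront has $\liminf=0$ at one of the two ends, hence its speed satisfies $c\ge c_*^+$ or $c\le c_*^-$ and therefore $c\notin(c_*^-,c_*^+)$; a pulse would require $\liminf=0$ at both ends and hence $c\ge c_*^+$ together with $c\le c_*^-$, which is impossible because $c_*^-<c_*^+$. I expect the main obstacle to be the bookkeeping in the transfer step: one must make sure the zero supplied by Proposition \ref{gg} lies strictly below the pole $z=\mu$ of $\chi$, so that dividing by $\beta+cz-z^2$ is legitimate and the sign of the denominator is controlled, and one must invoke the Appendix's characterization of the real zeros of $\chi_1(\cdot,c)$ and of the finiteness of $c_*^\pm$ with care, since everything hinges on matching convergence of the integral defining $\chi$ with that defining $\chi_1$.
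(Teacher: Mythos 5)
Your proposal is correct and follows exactly the route the paper intends (the paper gives no written proof, stating only that the result follows from Theorem \ref{thm:per}): you verify \textbf{(C)}, \textbf{(P)} and $\chi(0)<0$ for the two-point-space reformulation, apply the dichotomy and Proposition \ref{gg}, and transfer the resulting zero of $\chi$ to $\chi_1(\cdot,c)$ via the Appendix. The only nitpick is that ($\mathcal F$) does not assume $g$ locally Lipschitz --- but the bound $g(u)\le C_\delta u$ on $[0,\delta]$ still follows from $g(0)=0$, continuity and finiteness of $g'(0)$, so nothing is lost.
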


\subsection{Nonlocal lattice equations \cite{AGT,fwz,ma1,mazou,TPT,wen,Yu}}\hfill

Let consider semi-wavefronts $w_j(t)=u(j+ct)$ of the nonlocal lattice equation
$$
w'_j(t)= D[w_{j+1}(t)-2w_j(t)+w_{j-1}(t) ]-  dw_j(t) +\sum_{k\in \Z}\beta(j-k)g(w_k(t-r)),  \ j \in \Z, $$
where $\beta(k)\geq 0, \ \sum_{k\in \Z}\beta(k)=1$.
Let  $\pm\gamma^\#_\pm \geq 0$ be extended real numbers such that $\sum_{k\in \Z}\beta(k)e^{-zk}$ converges when $z\in \Gamma^\#:=(\gamma^\#_-,\gamma^\#_+)$ and is divergent when $\pm z > \pm \gamma^\#_{\pm}$.  By Cauchy-Hadamard formula,
$\gamma^\#_+ = - \limsup_{k \to +\infty} k^{-1}\ln\beta(-k)$, where by convention $\ln(0)=-\infty$. A similar formula also holds for $\gamma^\#_-$. 
The wave profile $u$ satisfies
\begin{equation}\label{sy3}
cu'(x)= D[u(x+1)+u(x-1) -2u(x)] - du(x) + \sum_{k\in \Z}\beta(k)g(u(x-k-cr)).
\end{equation}
Let us take now $c\not=0$. Then each positive bounded solution $u$ of (\ref{sy3}) satisfies (\ref{17}) 
with $X=\{\tau_0,\tau_1\}$  and
$$K(s,\tau)=\left\{\begin{array}{cc} D(H_{-1}(s)+H_1(s)), & \tau=\tau_0, \\  \sum_{k\in \Z}\beta(k)H_{k+cr}(s),& \tau=\tau_1, \end{array}\right. \quad  g(s,\tau)=\left\{\begin{array}{cc}  s, & \tau=\tau_0, \\ g(s),& \tau=\tau_1,\end{array}\right.$$
$$H_\tau(t)= |c|^{-1}e^{-\frac{2D+d}{c}(t-\tau)}\chi_{\R_+}(({\rm sign}\,c\,) (t-\tau)), \quad
\chi(z,c): =\tilde\chi(z,c)(2D+d+cz)^{-1}, $$
$$ \tilde\chi(z,c):= d+2D+cz -D(e^z +e^{-z})-g'(0)e^{-crz}\sum_{k\in \Z}\beta(k)e^{-kz}, \  d+2D+cz  >0.$$
The following statement  can be proved analogously to Lemma \ref{L23} in Appendix:  
\begin{lemma} \label{L23*} Assume that $\pm\gamma^\#_\pm  >0$  and that $g'(0) > d$. 
Then there exist 
real numbers $c_*^-<c_*^+$ such that, for every $c \in \frak{C}:=
(-\infty,c_*^-]\cup[c_*^+, +\infty)$, equation $\chi(\lambda,c) =0$
either (i) has exactly two real roots $\lambda_1(c)<\lambda_2(c)$ or
(ii) has exactly one real root $\lambda_1(c)$. Furthermore, each
$\lambda_j(c)$ is positive if $c\geq  c_*^+$ and is negative if
$c\leq  c_*^-$. If $c \in (c_*^-, c_*^+)$, then $\chi(z,c)> 0$ for all
$z \in (\gamma^\#_-, \gamma^\#_+)$.
\end{lemma}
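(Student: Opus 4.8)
The plan is to follow the proof of Lemma~\ref{L23} almost verbatim, the only structural change being that the quadratic $z^2$ and the integral $\int_\R K(s)e^{-zs}ds$ of the reaction--diffusion case are replaced here by the lattice symbol $D(e^z+e^{-z})$ and the bilateral series $F(z):=\sum_{k\in\Z}\beta(k)e^{-kz}$. Accordingly I would work not with $\chi$ itself but with its numerator $\tilde\chi(\cdot,c)$, since on the admissible region $\{d+2D+cz>0\}$ the two functions share both their real zeros and their sign. The first point to record is that $F$ converges exactly on $\Gamma^\#=(\gamma^\#_-,\gamma^\#_+)$, that $0\in\Gamma^\#$ by the hypothesis $\pm\gamma^\#_\pm>0$, and that $\tilde\chi(0,c)=d-g'(0)<0$ for every $c$ because $g'(0)>d$.

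Next I would establish the two analytic facts that drive everything. Writing $-g'(0)e^{-crz}F(z)=-g'(0)\sum_k\beta(k)e^{-(k+cr)z}$, a superposition of exponentials with nonpositive weights, and noting that $-D(e^z+e^{-z})$ is strictly concave while $d+2D+cz$ is affine, one sees that $z\mapsto\tilde\chi(z,c)$ is strictly concave on $\Gamma^\#$ and tends to $-\infty$ at both endpoints (through $F\to+\infty$ at a finite endpoint, and through the domination of $-De^{\pm z}$ at an infinite one). Hence $\tilde\chi(\cdot,c)$ has a unique maximizer $z_m(c)$ and at most two real zeros, and since $\tilde\chi(0,c)<0$ any such zeros lie strictly on the side of $0$ containing $z_m(c)$. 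The second fact is the monotone dependence on the speed: a direct computation gives
\[
\partial_c\tilde\chi(z,c)=z\bigl(1+g'(0)\,r\,e^{-crz}F(z)\bigr),
\]
and since $F>0$ and $r\ge 0$ the bracket is positive, so $\tilde\chi(z,\cdot)$ is strictly increasing for $z>0$ and strictly decreasing for $z<0$.

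With these in hand I would define the critical speeds by sections. For each fixed $z>0$ in $\Gamma^\#$ the map $c\mapsto\tilde\chi(z,c)$ is a strictly increasing continuous bijection of $\R$ onto $\R$ (limits $\pm\infty$ as $c\to\pm\infty$), so it has a unique zero $\sigma^+(z)$ with $\tilde\chi(z,c)>0\Leftrightarrow c>\sigma^+(z)$; I set $c_*^+:=\inf_{z>0}\sigma^+(z)$ and, symmetrically using $z<0$, $c_*^-:=\sup_{z<0}\sigma^-(z)$. The trichotomy then reads off: for $c>c_*^+$ some section is positive, so by concavity together with $\tilde\chi(0,c)<0$ there are exactly two zeros, both positive; for $c<c_*^+$ all sections over $z>0$ are negative, so there is no positive zero; and the symmetric statements hold at $c_*^-$. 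To see that the endpoints give exactly one (double) zero and that $c_*^\pm$ are finite real numbers, I would check that $\sigma^+$ is continuous on $(0,\gamma^\#_+)$ with $\sigma^+\to+\infty$ at both ends (near $0$ because offsetting $\tilde\chi(0,c)<0$ at a small argument forces $c\to+\infty$; near $\gamma^\#_+$ because $\tilde\chi\to-\infty$ for each fixed $c$), so its infimum is attained at an interior $z^\ast$ where $\tilde\chi(\cdot,c_*^+)$ has a tangential maximum of value $0$.

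The main obstacle, and the last step, is the strict ordering $c_*^-<c_*^+$ together with the root-free gap. Here I would prove that $\tilde\chi(\cdot,c)$ can never have a positive and a negative zero simultaneously: a positive zero forces $z_m(c)>0$ (a concave function with $\tilde\chi(\lambda)=0>\tilde\chi(0)$ for some $\lambda>0$ cannot be decreasing on $[0,\infty)$), and then $\tilde\chi(z,c)<\tilde\chi(0,c)<0$ for all $z\le 0$, excluding negative zeros; the reverse implication is symmetric. Consequently the half-lines $\{c:\exists\,z>0,\ \tilde\chi=0\}=[c_*^+,\infty)$ and $\{c:\exists\,z<0,\ \tilde\chi=0\}=(-\infty,c_*^-]$ are disjoint, which gives $c_*^-<c_*^+$; and for $c$ in the open gap there is neither a positive nor a negative zero, while $z=0$ is excluded because $\tilde\chi(0,c)=d-g'(0)\ne 0$. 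Thus $\tilde\chi(\cdot,c)$ has no real zero on the gap, so $\chi(\cdot,c)$ keeps a constant sign there, which is the content of the final assertion. The delicate points on which I expect to spend the most care are making the disjointness argument uniform across the cases $\gamma^\#_\pm$ finite or infinite, and justifying the attainment of the extrema defining $c_*^\pm$ so as to locate the tangency at the endpoints precisely as in Lemma~\ref{L23}.
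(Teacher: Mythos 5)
Your proposal is correct and follows essentially the same route as the paper, whose entire proof of this lemma is the instruction to repeat the argument of Lemma~\ref{L23} with the equation $d+2D+cz-g'(0)e^{-crz}\sum_{k\in\Z}\beta(k)e^{-kz}=D(e^z+e^{-z})$ in place of (\ref{tr}); your concavity-of-$\tilde\chi$ plus monotonicity-in-$c$ analysis is exactly that argument, merely written for the numerator $\tilde\chi$ instead of for the split $H(z,c)=G(z)$. One detail your computation makes visible: since $\tilde\chi(0,c)=d-g'(0)<0$ while the denominator $2D+d+cz$ is positive on the admissible region, the constant sign of $\chi(\cdot,c)$ on the gap $(c_*^-,c_*^+)$ is negative, so the printed inequality $\chi(z,c)>0$ in the statement should read $\chi(z,c)<0$ (the opposite sign in Lemma~\ref{L23} comes from $\psi(0,c)=p-q>0$ there).
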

\begin{proof}  See the proof of Lemma \ref{L23} below where it suffices to consider, instead of (\ref{tr}), the equation  
$$
d+2D+cz -g'(0)e^{-crz}\sum_{k\in \Z}\beta(k)e^{-kz} = D(e^z +e^{-z}).
$$
\end{proof}
A formal computation shows that $\tilde g(s)= 2Ds/(2D+d),$ $\theta (s) = ds/(2D+d),$  
$G(s)= g(s)/d$.  Therefore, in complete analogy with the previous subsection,  Theorem \ref{thm:existencia} 
yields the following
\begin{theorem}\label{p14} Let  $G(s) = g(s)/d$ has properties 1-3 listed in  Lemma \ref{ran} and  $
g(s) \leq g'(0)s$ for all $s \geq 0.
$
Then,  for every  $c \in \frak{C}\setminus\{0\}$,  the lattice 
equation has at least one semi-wavefront $u_j(t)= \phi_c(j
+ct)\leq \zeta_2$.  The profile $\phi_c$  shares every  property mentioned 
in the conclusion part of Theorem \ref{mainAA} (with $f=id$). 
\end{theorem}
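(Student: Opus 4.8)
The plan is to verify the hypotheses of Theorem~\ref{thm:existencia} for the convolution equation~(\ref{17}) produced above from the lattice equation, and then to reason exactly as in the proof of Theorem~\ref{mainAA}. In applying condition~(N) I let the atom carrying the birth term $g$ be the distinguished point of~(N) (it is the only possibly non-monotone nonlinearity), the complementary atom carrying the identity; both atoms are assigned mass $1$, which is the choice reproducing the formulas for $\tilde g$, $\Theta$ and $G$ recorded above.

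First I would check the structural conditions. A direct computation of the total kernel masses gives $\int_\R H_\tau(s)\,ds=(2D+d)^{-1}$, independent of $\tau$ and of $\operatorname{sign}c$, whence $\tilde g(s)=2Ds/(2D+d)$ and $\Theta(s)=s-\tilde g(s)=ds/(2D+d)$, a strictly increasing linear function. This already yields the first half of~N2 for every $\zeta_2$, while N1 and the remaining part of~N2 are encapsulated in the assumed properties~1--3 of $G(s)=\Theta^{-1}(Cg(s))=g(s)/d$, where $C=(2D+d)^{-1}$ is the mass of the birth kernel; in particular property~1 forces $g(s)>0$ for $s>0$. Since $g'(0)<+\infty$, $G'(0)=g'(0)/d$ is finite, and $G'(0)>1$ is exactly the hypothesis $g'(0)>d$; the bound $g(s,\tau)\le g'(0,\tau)s$ holds with equality on the identity atom and by hypothesis on the birth atom. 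Finally $\chi(0,c)=\tilde\chi(0,c)/(2D+d)=(d-g'(0))/(2D+d)<0$, so the monostability condition $\chi(0)<0$ is in force.

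Next, for a fixed $c\in\frak{C}\setminus\{0\}$ with $c<c_*^-$ or $c>c_*^+$, Lemma~\ref{L23*} provides a real zero $\lambda_1(c)$ of $\chi(\cdot,c)$ lying in the interior of its domain of definition; since $\chi(\cdot,c)$ is concave and $\chi(0,c)<0$, the function must change sign at $\lambda_1(c)$. Thus $\chi$ changes sign on an open interval $(0,\bar\omega)$ when $c>c_*^+$ (the zeros are positive) and on $(-\bar\omega,0)$ when $c<c_*^-$ (the zeros are negative), and Theorem~\ref{thm:existencia} furnishes a semi-wavefront $\phi_c\le\zeta_2$: with $\phi_c(-\infty)=0$ and $\liminf_{t\to+\infty}\phi_c>\zeta_1$ in the first case, and with $\phi_c(+\infty)=0$ and $\liminf_{t\to-\infty}\phi_c>\zeta_1$ in the second. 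The upgrade to a genuine wavefront, under the global attractivity of the fixed point $\kappa$ of $G\colon(0,\zeta_2]\to(0,\zeta_2]$, is the last assertion of Theorem~\ref{thm:existencia} via Remark~\ref{rs}.

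The only delicate point, and the main obstacle, is the boundary case $c\in\{c_*^-,c_*^+\}$ when this critical speed is finite and nonzero, for there the two zeros of $\chi(\cdot,c)$ coalesce into a tangency and no sign change is available. Following Section~\ref{SE}.2 (and \cite[Section~6, Case~II]{TAT}), I would take a sequence $c_j\downarrow c_*^+$ (resp.\ $c_j\uparrow c_*^-$), normalize the resulting profiles by $\phi_{c_j}(0)=0.5\,\zeta_1$, and pass to a subsequential limit in the compact-open topology. The obstacle is to make this limit meaningful: one must establish the uniform bound $\phi_{c_j}\le\zeta_2$ and an equicontinuity estimate of the type~(\ref{esc}) that is uniform for $c_j$ ranging over a neighborhood of $c_*^\pm$ bounded away from $0$ — this is precisely where $c_*^\pm\ne 0$ enters, since the kernels $H_{k+cr}$ and the rate $(2D+d)/c$ depend continuously on $c$ only away from the origin — and then invoke the dominated convergence theorem so that the limit $\phi$ solves~(\ref{17}) at $c=c_*^\pm$. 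Because the normalization forces $\inf_t\phi(t)\le 0.5\,\zeta_1<\zeta_1$, Theorem~\ref{thm:per_unif} applies and guarantees that $\phi$ is a genuine semi-wavefront with the asserted one-sided decay and with $\liminf>\zeta_1$ at the opposite end, completing the argument for every $c\in\frak{C}\setminus\{0\}$.
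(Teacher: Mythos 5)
Your proposal is correct and follows essentially the same route as the paper, which simply observes that $\tilde g(s)=2Ds/(2D+d)$, $\Theta(s)=ds/(2D+d)$, $G(s)=g(s)/d$ and then invokes Theorem \ref{thm:existencia} ``in complete analogy'' with Theorem \ref{mainAA}, including the limiting argument $c_j\downarrow c_*^+$ (resp.\ $c_j\uparrow c_*^-$) for the finite critical speeds. You in fact supply more detail than the paper does --- notably the correct identification of the birth atom as the distinguished point of {\rm \bf(N)}, the verification that $\chi(0,c)<0$ reduces to $g'(0)>d$, and the explanation of why $c=0$ must be excluded.
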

Theorem \ref{p14} extends  \cite[Theorem 3.1]{wen}, \cite[Theorem 2.1]{mazou}, 
 \cite[Theorem 5.4]{XLZ} and  \cite[ Theorem 4.1]{fwz1} for non-monotone $g$ and 
 asymmetric $\beta$.   
\section{Appendix}
Consider
$
 \psi(z,c) = z^2-cz-q+pe^{-zch}\int_ {\R}
K(s)e^{-zs}ds,
$ where  $p> q$ and 
$K \geq 0,$ $\int_\R K(s)ds = 1$.
\begin{lemma} \label{L23} Assume that $p > q >0$ and that $\psi(z,c)$ is defined for
all $z$ from some maximal open interval $(a,b) \ni 0$.
Then there exist %extended positive
real numbers $c_*^-<c_*^+$ such that, for every $c \in
(-\infty,c_*^-]\cup[c_*^+, +\infty)$, equation $\psi(\lambda,c) =0$
either (i) has exactly two real roots $\lambda_1(c)<\lambda_2(c)$ or
(ii) has exactly one real root $\lambda_1(c)$. Furthermore, each
$\lambda_j(c)\in (a,b)$ is positive if $c\geq c_*^+$ and is negative if
$c\leq  c_*^-$. If $c \in (c_*^-, c_*^+)$, then $\psi(z,c)> 0$ for all
$z \in (a,b)$.
\end{lemma}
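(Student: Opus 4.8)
The plan is to decouple the two variables by combining strict convexity of $z\mapsto\psi(z,c)$ with strict monotonicity of $c\mapsto\psi(z,c)$. Write $\Phi(z):=\int_\R K(s)e^{-zs}ds$ for the two-sided Laplace transform of $K$, so that $\psi(z,c)=z^2-cz-q+p\,\Phi(z)e^{-czh}$ and, crucially, $\psi(0,c)=p-q>0$ for every $c$ because $\Phi(0)=1$. I would first record two elementary facts. \emph{Convexity in $z$:} for each fixed $c$, the map $z\mapsto\psi(z,c)$ is strictly convex on $(a,b)$, since $\Phi(z)e^{-czh}=\int_\R K(s)e^{-z(s+ch)}ds$ is the Laplace transform of the nonnegative measure $K(s)\,ds$ (hence convex) and $z^2$ is strictly convex. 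Because $\psi(0,c)>0$, a strictly convex function that is positive at the interior point $0$ has at most two zeros, and any zeros it possesses lie strictly on one side of the origin. \emph{Monotonicity in $c$:} differentiating gives $\partial_c\psi(z,c)=-z\bigl(1+ph\,\Phi(z)e^{-czh}\bigr)$, whose sign is opposite to that of $z$ (here $h\ge0$ and $\Phi>0$); moreover $\psi(z,\cdot)$ is unbounded above and below as $c$ ranges over $\R$. Hence for each $z\in(a,b)\setminus\{0\}$ there is a unique speed $\hat c(z)$ with $\psi(z,\hat c(z))=0$, it depends continuously on $z$ on $(0,b)$ and on $(a,0)$, and $\psi(z,c)\le0\iff c\ge\hat c(z)$ for $z>0$ while $\psi(z,c)\le0\iff c\le\hat c(z)$ for $z<0$.

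With these facts I would set $c_*^+:=\inf_{z\in(0,b)}\hat c(z)$ and $c_*^-:=\sup_{z\in(a,0)}\hat c(z)$ and read off the lemma. The monotonicity statement shows that $\psi(\cdot,c)$ has a positive zero exactly when $c\ge c_*^+$ and a negative zero exactly when $c\le c_*^-$; the convexity statement forbids positive and negative zeros coexisting, which forces $c_*^-<c_*^+$. For $c\in(c_*^-,c_*^+)$ there is then no zero at all, and a zero-free strictly convex function positive at $0$ is positive throughout $(a,b)$, giving the final assertion. For $c\ge c_*^+$ every zero is positive and, by convexity, there are at most two; the count is governed by the endpoint limit $\ell(c):=\lim_{z\to b^-}\psi(z,c)$, with two distinct roots when $\ell(c)>0$ (case (i)) and a single root when $\ell(c)\le0$ (case (ii)), the two merging at $c=c_*^+$. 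The negative side is symmetric.

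The differentiations and the convexity of a Laplace transform are routine, so the main obstacle is the behaviour of $\psi$ at the endpoints $a,b$ of the maximal domain of convergence, which controls both the finiteness of $c_*^\pm$ and the dichotomy (i)/(ii). The points I would pin down are: (a) $\hat c(z)\to+\infty$ as $z\to0^+$, because $\psi(0,c)=p-q$ stays bounded away from $0$ and so the balancing speed must blow up, which confines the infimum defining $c_*^+$ to the interior and makes it finite; and (b) $\ell(c)=+\infty$ whenever $b=+\infty$ (the $z^2$ term dominates) or $\Phi(b^-)=+\infty$, so that case (i) holds there, whereas if $\Phi(b^-)<+\infty$ then $\ell(c)\to-\infty$ as $c\to+\infty$ (using $h\ge0$), which produces a genuine interval of speeds realizing case (ii). The symmetric analysis at $a$ handles $c_*^-$. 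Combining these endpoint limits with the convexity and monotonicity above yields all three conclusions; the application to $\chi_1$ in Section~\ref{apn} is then the specialization $q=f'(0)$, $p=g'(0)$, $h\ge0$, where $p>q$ is guaranteed by hypothesis $(\mathcal F)$.
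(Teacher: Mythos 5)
Your proposal is correct and follows essentially the same route as the paper: strict convexity of $\psi(\cdot,c)$ together with $\psi(0,c)=p-q>0$ gives at most two real roots of a common sign, while monotonicity in $c$ turns the set of admissible speeds into two half-lines whose endpoints define $c_*^{\pm}$. The only cosmetic difference is that you differentiate $\psi$ in $c$ directly (getting a globally signed $\partial_c\psi$), whereas the paper rewrites $\psi=0$ as $(q+cz-z^2)e^{zch}=p\int_\R e^{-zs}K(s)\,ds$ and tracks the $c$-dependence of the left-hand side; your endpoint analysis at $z\to 0^\pm$ and $z\to a^+,b^-$ fills in the same details the paper leaves as an ``easy analysis.''
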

\begin{proof}  Since $\psi''_z(z,c) > 0, \ z \in (a,b)$, we conclude that
$\psi(z,c)$  is strictly convex with respect to $z$.  Consequently,
the equivalent equation
\begin{equation}\label{tr}
(H(z,c):=) (q+cz-z^2)e^{zch}=
p\int_{\R}e^{-zs}K(s)ds \quad(:= G(z)).
\end{equation}
has at most two real roots. Since $\psi(0,c)=p-q>0$, the convexity
of $\psi$ guarantees that these roots (whenever exist) are of the
same sign. Next, we have that $G(0)=p>0, \ G''(z)
> 0$, $G(z)>0, \ z \in (a,b)$. The left hand side of (\ref{tr}) increases
to $+\infty$ [converges to $0$ ] at each $z\in (0,b)$ when $c$ tends to
$+\infty$  [to $-\infty$ respectively] and the right hand side does
not depend on $c$.  Moreover,  the left hand side of (\ref{tr})
increases with respect to $c$ at every positive point $z$
where $q+cz-z^2 >0$. In consequence,
if equation (\ref{tr})  has a positive root for
some $c= c'$,  then it does  have a positive root  for each $c > c'$.
All this implies the existence of  $c_*^+$ such
that the equation (\ref{tr}) have either two positive roots
$\lambda_1(c)\leq\lambda_2(c)$ or a unique positive root
$\lambda_1(c)$ if and only if $c\geq c_*^+$.
In fact, an easy analysis of (\ref{tr}) shows that the
positive $\lambda_1(c)$ exists and depend
continuously on $c$ from the maximal open interval $(c_*^+,\infty)$.
%, where $c_*^+$ is determined by the relation $\lambda_1(c_*^+) = \lambda_2(c_*^+)$.

Similarly, the left hand side of (\ref{tr}) increases to $+\infty$ [converges to $0$] at each $z\in (a,0)$ when $c$ tends to $-\infty$ [to $+\infty$ respectively]. Moreover,  the left hand side of (\ref{tr})
decreases with respect to $c$ at every  $z<0$
where $q+cz-z^2 >0$. This implies the existence of $c_*^-$ such that the equation (\ref{tr}) has either two negative roots  $\lambda_1(c)\leq\lambda_2(c)$ or a unique negative root
$\lambda_2(c)$  if and only if $c\leq c_*^-$. Again the negative $\lambda_2(c)$ exists and depend
continuously on $c\in(-\infty,c_*^-)$.

The above considerations also shows that $c_*^-$ and $c_*^+$ are finite, and $c_*^- < c_*^+$.
\end{proof}
\begin{remark} \label{RL23} With the unique exception ($c_*^-=-\infty$), all
conclusions of  Lemma \ref{L23} hold also true   in the case when
$(a,b) =(0,b), \ b >0$.  To prove the finiteness of $c_*^+$, it
suffices to observe that for every positive $\delta$ there exists
$c_1 <0$ such that $H(z,c) <0$ for all $z > \delta, \ c < c_1$ and
$H(z,c) <p$ for all $z \in (0,\delta), \ c < c_1$. A similar
assertion (with $c_*^+=+\infty$ ) is valid when $(a,b)= (a,0), \  a
<0$.
\end{remark}
\section*{Acknowledgments}
This research was supported by FONDECYT 1070127 (H. P.)  and 
1110309 (C. G. and S. T.).  H. Prado  and S. Trofimchuk  acknowledge support from 
DICYT USACH 041133PC  and  CONICYT  through PBCT program ACT-56, respectively.  
C. Gomez was also  supported by CONICYT programs    
"Pasant\'ias doctorales en el extranjero" and "Becas para Estudios de Doctorado en Chile".

\medskip
\medskip

\end{document}